\newtheoremstyle{neu_thm}% name
{13pt}       % Space above
{8pt}      % Space below
{\itshape}  % Body font
{}          % Indent amount (empty = no indent, \parindent = para indent)
{\bfseries} % Thm head font
{.}         % Punctuation after thm head
{.5em}      % Space after thm head: " " = normal interword space;
\newtheoremstyle{neu_defn}% name
{13pt}       % Space above
{8pt}      % Space below
{}  % Body font
{}          % Indent amount (empty = no indent, \parindent = para indent)
{\bfseries} % Thm head font
{.}         % Punctuation after thm head
{.5em}      % Space after thm head: " " = normal interword space;
\theoremstyle{neu_thm}
\newtheorem{thm}{Theorem}[section]
\newtheorem{cor}[thm]{Corollary}
\newtheorem{lem}[thm]{Lemma}
\theoremstyle{neu_defn}
\newtheorem{rem}[thm]{Remark}
\titleformat{\section}{\normalfont\bfseries\centering}{\thesection.}{.25em}{}
\titleformat{\subsection}{\normalfont\bfseries}{\thesubsection.}{.25em}{}
\titlespacing{\section}{0pt}{*5}{*1.5}
\titlespacing{\subsection}{0pt}{*4}{*0.5}
\numberwithin{equation}{section}
\newcommand{\paragraf}{\textsection}
\renewcommand{\emptyset}{\varnothing}
\newcommand{\C}{\ensuremath{\mathbb C}}    % Komplexe Zahlen
\newcommand{\calF}{\mathcal F}         
\newcommand{\calH}{\mathcal H}
\newcommand{\calK}{\mathcal K}
\newcommand{\la}{\lambda}
\newcommand{\veps}{\varepsilon}
\newcommand{\linspan}{\operatorname{span}}
\newcommand{\ran}{\operatorname{ran}}
\newcommand{\ol}{\overline}
\definecolor{darkgreen}{rgb}{0,0.6,0.1}
\newcommand{\ind}{\operatorname{ind}}
\newcommand{\dist}{\operatorname{dist}}
\newcommand{\defi}{\operatorname{def}}
\newcommand{\nuli}{\operatorname{nul}}
\newcommand{\rank}{\operatorname{rank}}
\newcommand{\rmref}[1]{{\rm\ref{#1}}}
\begin{document}
%%%%%%%%%%%%%%%%%%%%%%%%%%%%%%%%%%%%%%%%%%%%%%%%%%%%%%%%%%%%%%%%%%%%%%%%%%%%%
%%%
%%%  HEAD OF PAPER
%%%
\title[]{The Effect of Perturbations of Linear Operators on Their Polar Decomposition}

\author{Richard Duong}
\address{Institut f\"ur Mathematik, Technische Universit\"at Berlin, Stra\ss e des 17.\ Juni 136, 10623 Berlin, Germany}
\email{richard\_duong@gmx.de}

\author{Friedrich Philipp}
\address{Departamento de Matem\'atica, Facultad de Ciencias Exactas y Naturales, Universidad de Buenos Aires, Ciudad Universitaria, Pabell\'on I, 1428 Buenos Aires, Argentina}
\email{fmphilipp@dm.uba.ar}
\urladdr{http://cms.dm.uba.ar/Members/fmphilipp/}

%%%%%%%%%%%%%%%%%%%%%%%%%%%%      ABSTRACT      %%%%%%%%%%%%%%%%%%%%%%%%%%%%%
\begin{abstract}
The effect of matrix perturbations on the polar decomposition has been studied by several authors and various results are known. However, for operators between infinite-dimensional spaces the problem has not been considered so far. Here, we prove in particular that the partial isometry in the polar decomposition of an operator is stable under perturbations, given that kernel and range of original and perturbed operator satisfy a certain condition. In the matrix case, this condition is weaker than the usually imposed equal-rank condition. It includes the case of semi-Fredholm operators with agreeing nullities and deficiencies, respectively. In addition, we prove a similar perturbation result where the ranges or the kernels of the two operators are assumed to be sufficiently close to each other in the gap metric.
\end{abstract}
%%%%%%%%%%%%%%%%%%%%%%%%%%%%%%%%%%%%%%%%%%%%%%%%%%%%%%%%%%%%%%%%%%%%%%%%%%%%%

\subjclass[2010]{Primary 47A05; Secondary 47A55}

\keywords{Linear Operator, Hilbert space, polar decomposition, perturbation}

%\thanks{No thanks yet}

\maketitle
\thispagestyle{empty}

\section{Introduction}
The polar decomposition of a bounded linear operator $A\in L(\calH,\calK)$ between two Hilbert spaces $\calH$ and $\calK$ is a representation $A = Q|A|$ of $A$ where $Q\in L(\calH,\calK)$ is a partial isometry and $|A| = (A^*A)^{1/2}$. It is well known that $Q$ in this decomposition is unique if it is required that its kernel coincides with that of $A$. The operator $Q$ will be called the {\em angular factor} of $A$.

Studies on the behaviour of the polar decomposition of matrices under perturbations can be traced back to the paper \cite{h} by N.J. Higham from 1986 where the author in particular finds estimates on the Frobenius norm difference $\|Q_1 - Q_2\|_F$ of the angular factors $Q_1$ and $Q_2$ of two invertible square matrices $A_1$ and $A_2$, which are close to each other. He then introduces an algorithm which computes the polar decomposition of a matrix and uses his result in an error analysis for the algorithm. Such algorithms enjoy applications in, e.g., psychometrics or aerospace computations. However, the perturbation result was far from being optimal in various aspects, and many papers written by several authors followed (see, e.g., \cite{b,ma,rcli1,rcli2,cg,wlisun,cls,cl,wli,hmz} (in chronological order)) -- generalizing the conditions and consecutively improving the perturbation bound. Also rectangular matrices and other norms than the Frobenius norm were considered. The perturbation results were all of the following form: {\it Let $A_1,A_2\in\C^{m\times n}$ be matrices of the same rank. Then for their angular factors $Q_1$ and $Q_2$ we have}
\begin{equation}\label{e:general}
\|Q_1 - Q_2\|_*\,\le\,C\|A_1 - A_2\|_*.
\end{equation}
Here, $\|\cdot\|_*$ is a general unitarily invariant norm, the Frobenius norm or the spectral norm, and the constant $C$ depends on the smallest positive singular values of $A_1$ and $A_2$ (and does not tend to infinity as $A_1$ tends to $A_2$). Often, $C$ is of the form $D/(\sigma_1 + \sigma_2)$, where $D$ is a fixed number and $\sigma_j$ the smallest positive singular value of $A_j$.

In what follows, the requirement that $\rank(A_1) = \rank(A_2)$ will be referred to as the {\em rank condition}. It quickly becomes clear that in order to have an inequality as in \eqref{e:general} it is necessary to impose such an assumption on the matrices. For example, consider the matrices $A_1 = 0$ and $A_2 = \veps I$, $\veps > 0$, which do not satisfy the rank condition. Then $Q_1 = 0$ and $Q_2 = I$ and thus $\|A_1 - A_2\|_2 = \veps\to 0$ as $\veps\to 0$, but $\|Q_1 - Q_2\|_2 = 1$. However, as we shall see, the rank condition can be relaxed such that \eqref{e:general} still holds -- at least in the case of the spectral norm.

It is evident that the rank condition on the matrices in the results above will not make sense for linear operators between infinite-dimensional spaces. In this paper we consider the question how this condition can be replaced by a new criterion which is also meaningful in the infinite-dimensional situation and under which a perturbation relation as in \eqref{e:general} still holds. Hereby, we restrict ourselves to the operator norm (which is the spectral norm in the finite-dimensional setting). The following theorem is a special case of our main result Theorem \ref{t:main} and generalizes the matrix perturbation results to the case of bounded semi-Fredholm operators between possibly infinite-dimensional Hilbert spaces. Recall that nullity $\nuli(A)$ and deficiency $\defi(A)$ of a bounded linear operator $A$ are defined via the dimension of its kernel and the codimension of its range, respectively, and that the operator $A$ is called semi-Fredholm if its range is closed and one of these quantities is finite.

\begin{thm}\label{t:fredholm}
Let $\calH$ and $\calK$ be Hilbert spaces and let $A_1,A_2\in L(\calH,\calK)\setminus\{0\}$ be semi-Fredholm such that $\nuli(A_1) = \nuli(A_2)$ and $\defi(A_1) = \defi(A_2)$. If $A_j = Q_j|A_j|$ is the polar decomposition of $A_j$, $j=1,2$, then
\begin{equation}\label{e:bound}
\|Q_1 - Q_2\|\,\le\,\frac{4}{\sigma_1 + \sigma_2}\,\|A_1 - A_2\|,
\end{equation}
where $\sigma_j$ denotes the smallest positive spectral value of $|A_j| = (A_j^*A_j)^{1/2}$, $j=1,2$.
\end{thm}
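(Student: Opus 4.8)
The plan is to symmetrise the problem through the self-adjoint dilation. Put $\calG = \calK\oplus\calH$ and $M_j = \smallmat{0}{A_j}{A_j^*}{0}\in L(\calG)$. Since $M_j^2 = \smallmat{A_jA_j^*}{0}{0}{A_j^*A_j}$, one reads off $|M_j| = \smallmat{|A_j^*|}{0}{0}{|A_j|}$ and that the angular factor of $M_j$ is the self-adjoint partial isometry $S_j := \operatorname{sgn}(M_j) = \smallmat{0}{Q_j}{Q_j^*}{0}$. Hence $\|M_1 - M_2\| = \|A_1 - A_2\|$ and $\|S_1 - S_2\| = \|Q_1 - Q_2\|$, so it suffices to bound $\|S_1 - S_2\|$. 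The smallest positive spectral value of $|M_j|$ is again $\sigma_j$, and as $\sigma(M_j)$ is symmetric about $0$ this yields the spectral gap $\sigma(M_j)\cap(-\sigma_j,\sigma_j)\subseteq\{0\}$; moreover $\dim\ker M_j = \nuli(A_j) + \defi(A_j)$, so the two hypotheses collapse into the single condition $\dim\ker M_1 = \dim\ker M_2$.

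Writing $E = M_1 - M_2$ and using $M_j = S_j|M_j| = |M_j|S_j$, a short computation gives the Sylvester-type identity
\[
 |M_1|(S_1 - S_2) + (S_1 - S_2)|M_2| = E - S_1 E S_2,
\]
whose right-hand side has norm at most $2\|E\|$ because $\|S_j\|\le 1$. Let $P_0^{(j)}$ be the orthogonal projection onto $\ker M_j$ and $P_1^{(j)} = I - P_0^{(j)}$. Since $|M_j|\ge\sigma_j$ on $\ran P_1^{(j)}$ and vanishes on $\ker M_j$, the map $X\mapsto |M_1|X + X|M_2|$ is bounded below by $\sigma_1+\sigma_2$ on the block $P_1^{(1)}X P_1^{(2)}$; inverting it there controls the regular part, $\|P_1^{(1)}(S_1-S_2)P_1^{(2)}\|\le\tfrac{2}{\sigma_1+\sigma_2}\|E\|$. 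Furthermore $P_0^{(1)}(S_1-S_2)P_0^{(2)} = 0$, since each $S_j$ annihilates $\ker M_j$.

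What remains are the mixed blocks $P_1^{(1)}(S_1-S_2)P_0^{(2)} = S_1 P_0^{(2)}$ and $P_0^{(1)}(S_1-S_2)P_1^{(2)} = -P_0^{(1)}S_2$. Because $S_1 P_0^{(1)} = 0$ and $P_0^{(2)}S_2 = 0$, these equal $S_1(P_0^{(2)} - P_0^{(1)})$ and $(P_0^{(1)} - P_0^{(2)})S_2$, so both are dominated by the opening $\|P_0^{(1)} - P_0^{(2)}\|$ between the two kernels. I expect this to be the heart of the proof and the only place where the dimension hypothesis is genuinely used. For a unit $x\in\ker M_2$ one has $M_1 x = Ex$, whence $\|P_1^{(1)}x\|\le\sigma_1^{-1}\|E\|$, giving the directed gap bound $\vec\delta(\ker M_2,\ker M_1)\le\sigma_1^{-1}\|E\|$; symmetrically $\vec\delta(\ker M_1,\ker M_2)\le\sigma_2^{-1}\|E\|$. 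The subtle point is that these two one-sided gaps coincide \emph{precisely because $\dim\ker M_1 = \dim\ker M_2$}; their common value equals $\|P_0^{(1)} - P_0^{(2)}\|$ and is therefore at most the smaller bound $\tfrac{1}{\max(\sigma_1,\sigma_2)}\|E\|\le\tfrac{2}{\sigma_1+\sigma_2}\|E\|$. Without the equal-dimension assumption the two gaps differ and $\|P_0^{(1)} - P_0^{(2)}\|$ may equal $1$ for arbitrarily small $E$ -- exactly the obstruction seen for $A_1 = 0$, $A_2 = \veps I$.

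Finally I would assemble the four blocks: the regular block contributes $\tfrac{2}{\sigma_1+\sigma_2}\|E\|$, the pure-kernel block vanishes, and each mixed block is bounded by $\|P_0^{(1)}-P_0^{(2)}\|\le\tfrac{2}{\sigma_1+\sigma_2}\|E\|$; combining them with due attention to the orthogonality of the block supports and recalling $\|E\| = \|A_1-A_2\|$ and $\|S_1-S_2\| = \|Q_1-Q_2\|$ yields the claimed bound with constant $4$. The main obstacle is the kernel-opening estimate of the previous step -- establishing the symmetry of the two directed gaps, and hence the good denominator $\sigma_1+\sigma_2$ -- which is where the equality of nullities and deficiencies is indispensable; by contrast the metric (Sylvester) part is comparatively routine.
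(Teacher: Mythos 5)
Your dilation framework is sound as far as it goes: the identity $|M_1|(S_1-S_2)+(S_1-S_2)|M_2| = E - S_1ES_2$ is correct (since $S_jM_j = |M_j| = M_jS_j$), the block decomposition along $\ker M_j$ and its complement is legitimate, the Sylvester estimate on the regular block works, and the $\ell^2$-assembly of the blocks would indeed give a constant $2\sqrt3\le 4$. The argument founders at the one step you yourself identify as the heart of the proof: you claim that the two directed gaps $\delta(\ker M_1,\ker M_2)$ and $\delta(\ker M_2,\ker M_1)$ coincide \emph{because} $\dim\ker M_1=\dim\ker M_2$. In infinite dimensions this implication is false. Take $V\subsetneq W$ two nested closed subspaces of the same infinite dimension (e.g.\ $V=\ol{\linspan}\{e_2,e_3,\dots\}\subset W=\ell^2$); then $\delta(V,W)=0$ while $\delta(W,V)=1$. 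Concretely, with $A_1:\ell^2\to\ell^2\oplus\ell^2$ the embedding onto the first summand and $A_2$ the unilateral shift followed by that embedding, both operators satisfy the hypotheses of the theorem ($\nuli=0$, $\defi=\infty$, closed ranges), yet $\ker M_1\subsetneq\ker M_2$ are nested infinite-dimensional subspaces, so $\|P_0^{(1)}-P_0^{(2)}\|=1$ while your claimed bound would have to be $\le\|E\|/\max\{\sigma_1,\sigma_2\}$. So the symmetry of the directed gaps needs a genuine argument, and equality of cardinal dimensions does not supply one.

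There is a second, structural obstacle that makes the dilation hard to repair. The paper's substitute for your dimension claim is the condition $\Delta(V,W)=0$, characterized in Lemma~\ref{l:Deltazero} by simultaneous surjectivity (or non-surjectivity) of $P_V|W$ and $P_W|V$, with equal \emph{finite} dimension as a sufficient condition (Corollary~\ref{c:suff_Deltazero}). Under the semi-Fredholm hypothesis only one of the pairs $\bigl(N(A_1),N(A_2)\bigr)$ or $\bigl(R(A_1),R(A_2)\bigr)$ is guaranteed to have this property --- whichever side is finite --- and Theorem~\ref{t:main} is deliberately built to need only one of them, handling the other by passing to adjoints. Your dilation fuses the two into the single subspace $\ker M_j = R(A_j)^\perp\oplus N(A_j)$, whose directed gaps are maxima of the range-gaps and kernel-gaps separately; symmetry for one component does not give symmetry for the direct sum (and in the example above it visibly fails). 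To salvage the approach you would have to keep the two mixed blocks separate, bound $\|Q_1P_{N(A_2)}\|=\delta(N(A_2),N(A_1))$ and $\|Q_1^*P_{R(A_2)^\perp}\|=\delta(R(A_1),R(A_2))$ individually, and invoke the gap-symmetry argument only on the finite-dimensional side --- at which point you have essentially reconstructed the paper's proof of Theorem~\ref{t:main} together with its reduction to index zero.
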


Clearly, in the finite-dimensional situation, the condition on $A_1$ and $A_2$ in Theorem \ref{t:fredholm} coincides with the rank condition $\rank(A_1) = \rank(A_2)$. However, we would like to emphasize that the condition in our main result Theorem \ref{t:main} is weaker than the rank condition in the matrix case. An illustrative example is given in Remark \ref{r:vergleich}. Hence, Theorem \ref{t:main} in particular extends the above-mentioned results for matrices.

We shall also prove the following theorem which shows that a similar perturbation bound as in \eqref{e:bound} holds for closed range operators whose ranges or kernels are ``not too far apart from each other'' in the gap metric $\hat\delta$ (to be introduced in Subsection \ref{ss:gap}).

\begin{thm}\label{t:cr_gap}
Let $\calH$ and $\calK$ be Hilbert spaces and let $A_1,A_2\in L(\calH,\calK)\setminus\{0\}$ be operators with closed ranges. If $A_j = Q_j|A_j|$ is the polar decomposition of $A_j$, $j=1,2$, then
\begin{equation}\label{e:side_estimate}
\|Q_1 - Q_2\|\,\le\,\left(\frac{3}{\sigma_1 + \sigma_2} + \frac{1}{\min\{\sigma_1,\sigma_2\}}\right) \|A_1 - A_2\|,
\end{equation}
where $\sigma_j$ denotes the smallest positive spectral value of $|A_j| = (A_j^*A_j)^{1/2}$, $j=1,2$. If the gap between the ranges or the kernels is smaller than one, i.e.,
\begin{equation}\label{e:gaps}
\hat\delta(R(A_1),R(A_2)) < 1\qquad\text{or}\qquad\hat\delta(N(A_1),N(A_2)) < 1,
\end{equation}
then ``$\min\!$'' in \eqref{e:side_estimate} can be replaced by ``$\max\!$'' so that, in this case,
\begin{equation}\label{e:mitgap}
\|Q_1 - Q_2\|\,\le\,\frac{5}{\sigma_1 + \sigma_2}\,\|A_1 - A_2\|.
\end{equation}
\end{thm}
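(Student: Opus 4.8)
The plan is to derive a Sylvester equation for the difference of the angular factors and to analyse it block by block. Set $E:=A_1-A_2$ and $W:=Q_1-Q_2$, and recall that each $A_j$ admits both a right and a left polar decomposition $A_j=Q_j|A_j|=|A_j^*|\,Q_j$, where $|A_j^*|:=(A_jA_j^*)^{1/2}$, that $Q_j^*Q_j=:P_j$ projects onto $N(A_j)^\perp$ and $Q_jQ_j^*=:\hat P_j$ onto the (closed) range $R(A_j)$, and that the smallest positive spectral value of $|A_j^*|$ again equals $\sigma_j$. Using $|A_2|=A_2^*Q_2$, $|A_1^*|=A_1Q_1^*$ and the self-adjointness relation $A_1Q_1^*=Q_1A_1^*$, a direct expansion gives the identity
\[
 |A_1^*|\,W+W\,|A_2|\;=\;E-Q_1E^*Q_2 ,
\]
together with its mirror image $|A_2^*|\,W+W\,|A_1|=E-Q_2E^*Q_1$. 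Since the $Q_j$ are partial isometries, the right-hand sides have norm at most $2\|E\|$.

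The analytic engine is the Lyapunov estimate: for bounded positive operators $G\ge a>0$ and $H\ge b>0$ one has $X=\int_0^\infty e^{-tG}(GX+XH)e^{-tH}\,dt$, hence $\|X\|\le(a+b)^{-1}\|GX+XH\|$. The obstruction to applying this to the full identity is that $|A_1^*|$ and $|A_2|$ are bounded below only on $R(A_1)$ and $N(A_2)^\perp$ and vanish on the orthogonal complements. I therefore compress with the projections $\hat P_1$ and $P_2$: the corner block $W_{11}:=\hat P_1WP_2$ solves $|A_1^*|W_{11}+W_{11}|A_2|=\hat P_1(E-Q_1E^*Q_2)P_2$, where both factors are now bounded below by $\sigma_1$ and $\sigma_2$, so $\|W_{11}\|\le\frac{2}{\sigma_1+\sigma_2}\|E\|$. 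The opposite corner $\hat P_1^\perp WP_2^\perp$ vanishes, because $\hat P_1^\perp Q_1=0$ and $Q_2P_2^\perp=0$.

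It remains to estimate the two off-diagonal blocks, and this is exactly where the kernels and ranges enter. From $Q_jP_j=Q_j$ and the isometry of $Q_j$ on $N(A_j)^\perp$ one computes $\|\hat P_1WP_2^\perp\|=\|P_1P_2^\perp\|=\delta\big(N(A_2),N(A_1)\big)$ and $\|\hat P_1^\perp WP_2\|=\delta\big(R(A_2),R(A_1)\big)$, where $\delta$ is the one-sided gap. For $x\in N(A_2)$ one has $A_1x=Ex$ and hence $\|P_1x\|\le\sigma_1^{-1}\|A_1x\|\le\sigma_1^{-1}\|E\|\,\|x\|$; running this argument for both operators and for their adjoints (whose smallest positive spectral values are again $\sigma_1,\sigma_2$) yields
\[
 \delta\big(N(A_2),N(A_1)\big),\,\delta\big(R(A_1),R(A_2)\big)\le\tfrac{1}{\sigma_1}\|E\|,\qquad
 \delta\big(N(A_1),N(A_2)\big),\,\delta\big(R(A_2),R(A_1)\big)\le\tfrac{1}{\sigma_2}\|E\| .
\]

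To obtain \eqref{e:side_estimate} I split off, by the triangle inequality, whichever off-diagonal block carries the weaker factor $1/\min\{\sigma_1,\sigma_2\}$, and bound the remaining row or column block $[\,W_{11}\ \cdot\,]$ by $(\|W_{11}\|^2+\|\cdot\|^2)^{1/2}$; as the surviving off-diagonal factor is then $1/\max\{\sigma_1,\sigma_2\}\le 2/(\sigma_1+\sigma_2)$, this block is at most $\frac{2\sqrt2}{\sigma_1+\sigma_2}\|E\|\le\frac{3}{\sigma_1+\sigma_2}\|E\|$, which proves \eqref{e:side_estimate}. For the sharper bound under \eqref{e:gaps} I use the gap-symmetry fact that $\hat\delta(M,N)<1$ forces $\delta(M,N)=\delta(N,M)$ (both equal the supremum of the sines of the common principal angles). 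Combined with the four estimates above, a range gap makes $\delta(R(A_2),R(A_1))=\delta(R(A_1),R(A_2))\le\frac{1}{\max\{\sigma_1,\sigma_2\}}\|E\|$, and a kernel gap does the same for the kernel blocks; choosing the original decomposition or its mirror so that the critical weak block is the improved one upgrades the factor from $1/\min$ to $1/\max$, giving $\|W\|\le\big(\frac{3}{\sigma_1+\sigma_2}+\frac{1}{\max\{\sigma_1,\sigma_2\}}\big)\|E\|\le\frac{5}{\sigma_1+\sigma_2}\|E\|$, i.e.\ \eqref{e:mitgap}. The main obstacle is precisely the degeneracy of $|A_1^*|$ and $|A_2|$: the Sylvester operator is boundedly invertible only on the corner block, so the off-diagonal contributions — which encode the mismatch of the kernels and of the ranges — must be controlled by hand, and it is their interaction with the gap-symmetry identity that decides between the factors $1/\min$ and $1/\max$.
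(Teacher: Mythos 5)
Your argument is correct, and it reaches both \eqref{e:side_estimate} and \eqref{e:mitgap} by a route that is genuinely different from the paper's, even though the analytic engine (a Sylvester/Lyapunov bound with separation constant $\sigma_1+\sigma_2$, plus one-sided gap estimates of the form $\delta\le\|A_1-A_2\|/\sigma_j$ and the symmetry $\delta(V,W)=\delta(W,V)$ when $\hat\delta(V,W)<1$) is the same. The paper works with the auxiliary operator $X=Q_2^*Q_1-P_{N(A_2)^\perp}$ and forces invertibility of the Sylvester coefficients by adding $\sigma_jP_{N(A_j)}$ to $|A_j|$; this produces a four-term expansion, three terms of which are bounded by $\|A_1-A_2\|$, and a single leftover term $\|P_{N(Q_2^*)}Q_1\|$ equal to the range gap, with the kernel-gap case of \eqref{e:gaps} handled by passing to adjoints at the very end. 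You instead treat $W=Q_1-Q_2$ directly: mixing the left and right polar decompositions gives the clean identity $|A_1^*|W+W|A_2|=E-Q_1E^*Q_2$, and the degeneracy of the coefficients is handled by compressing to the $2\times2$ block decomposition along $R(A_1)$ and $N(A_2)^\perp$. What this buys is transparency: the opposite corner vanishes, the invertible corner is controlled by $2\|E\|/(\sigma_1+\sigma_2)$, and the two off-diagonal blocks are \emph{identically} the one-sided gaps $\delta(N(A_2),N(A_1))$ and $\delta(R(A_2),R(A_1))$, so both the kernel and the range mismatch appear explicitly and symmetrically; the choice between your decomposition and its mirror under \eqref{e:gaps} plays the role of the paper's passage to adjoints. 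I checked the key identities ($|A_1^*|=Q_1A_1^*=A_1Q_1^*$, $|A_2|=A_2^*Q_2$, the commutation of $|A_1^*|$ and $|A_2|$ with the compressing projections, and the block norms) and the bookkeeping of which decomposition carries the $1/\min$ block in each of the four sign cases; everything closes, and your intermediate constant $2\sqrt2/(\sigma_1+\sigma_2)$ for the surviving row/column is in fact marginally sharper than the paper's $3/(\sigma_1+\sigma_2)$ before being rounded up to match the stated bounds.
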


Theorem \ref{t:cr_gap} is not a generalization of the matrix perturbation results. Indeed, if $\calH$ and $\calK$ are finite-dimensional, then \eqref{e:gaps} implies $\rank(A_1) = \rank(A_2)$, but the converse is false. However, if, in addition to the rank condition, we assume that $A_1$ and $A_2$ are small perturbations of each other, then \eqref{e:gaps} holds (cf.\ Remark \ref{r:smallpert}). Hence, Theorem \ref{t:cr_gap} is a generalization of the matrix results for small perturbations.

The paper is organized as follows. In Section 2 we collect several known facts and auxiliary statements from operator theory and Hilbert space theory which will be used in the proofs of the perturbation results. In this process, we also define and discuss the {\em gap difference} $\Delta(V,W)$ between subspaces $V$ and $W$ which seems to be uncommon but enters in the conditions of the main theorem, Theorem \ref{t:main}. This theorem is then formulated and proved in Section 3 where we also prove Theorem \ref{t:cr_gap} and Theorem \ref{t:fredholm}, as a corollary of Theorem \ref{t:main}.

\section{Preparations}
In this section we fix the notation we will use and recall a few facts from operator theory and Hilbert space theory which we shall utilize in the proof of the main theorem.

Throughout the paper, $\calH$ and $\calK$ always denote complex Hilbert spaces. By $L(\calH,\calK)$ we denote the set of all bounded linear operators mapping from $\calH$ to $\calK$. As usual, we write $L(\calH) := L(\calH,\calH)$. The operator norm of $S\in L(\calH,\calK)$ will be denoted by $\|S\|$. The spectrum of an operator $T\in L(\calH)$ is denoted by $\sigma(T)$, i.e.,
$$
\sigma(T) = \{\la\in\C\,|\, T - \la I : \calH\to\calH\text{ is not bijective}\}.
$$
For $S\in L(\calH,\calK)$ we denote the range and the kernel of $S$ by $R(S)$ and $N(S)$, respectively. Furthermore, we set $|S| := (S^*S)^{1/2}$. If $M\subset\calH$ is a closed subspace, $P_M$ denotes the orthogonal projection onto $M$. The restriction of $S\in L(\calH,\calK)$ to $M$ will be denoted by $S|M$. The notion ``$\dim$'' always refers to the Hilbert space dimension which is a finite or infinite cardinal number.

\subsection{Prerequisites from Operator Theory}
Theorem \ref{t:fredholm} in the introduction involves the ``smallest positive spectral value'' of a positive selfadjoint operator with closed range. The next lemma shows in particular that this value in fact exists. For a proof we refer to \cite{kn}.

\begin{lem}\label{l:ranclosed}
Let $A\in L(\calH,\calK)$, $A\neq 0$, and assume that $R(A)$ is closed. Then we have
\begin{equation}\label{e:ran}
R((A^*A)^{1/2}) = R(A^*A) = R(A^*)
\end{equation}
as well as
\begin{equation}\label{e:sigma}
\inf\left(\sigma(|A|)\setminus\{0\}\right) = \left\|\left(A\big|N(A)^\perp\right)^{-1}\right\|^{-1} = \left\|\left(|A|\big|N(A)^\perp\right)^{-1}\right\|^{-1} > 0,
\end{equation}
where $A|N(A)^\perp$ is considered as an operator from $N(A)^\perp$ to $R(A)$.
\end{lem}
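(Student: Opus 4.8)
The plan is to reduce the whole lemma to the single pointwise identity $\||A|x\| = \|Ax\|$ valid for every $x\in\calH$, which follows from $\||A|x\|^2 = \langle A^*Ax,x\rangle = \|Ax\|^2$, combined with the elementary fact that a bounded operator with closed range is bounded below on the orthogonal complement of its kernel. From this identity one reads off at once the kernel identities $N(A) = N(A^*A) = N(|A|)$, which I would record first.

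For the range equalities \eqref{e:ran} I would argue as follows. Since $R(A)$ is closed, the restriction $A_0 := A\big|N(A)^\perp\colon N(A)^\perp\to R(A)$ is a bounded bijection and hence bounded below: there is $c>0$ with $\|Ax\|\ge c\|x\|$ for all $x\in N(A)^\perp$. By the pointwise identity the same lower bound holds for $|A|$ on $N(|A|)^\perp = N(A)^\perp$, so $|A|$ has closed range as well. As $|A|$ is selfadjoint, its range closure equals the orthogonal complement of its kernel, whence $R(|A|) = N(|A|)^\perp = N(A)^\perp$. The closed range theorem gives that $R(A^*)$ is closed too, and since $\ol{R(A^*)} = N(A)^\perp$ always holds, we get $R(A^*) = N(A)^\perp$. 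Finally, because $|A|$ annihilates $N(A)$ and satisfies $R(|A|)=N(A)^\perp$, it maps $N(A)^\perp$ bijectively onto $N(A)^\perp$; applying $|A|$ once more shows that $A^*A = |A|^2$ does the same, so $R(A^*A) = N(A)^\perp$. This proves all three ranges coincide.

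For \eqref{e:sigma} I would invoke the standard formula $\|T^{-1}\|^{-1} = \inf_{x\neq 0}\|Tx\|/\|x\|$ for a bounded bijection $T$ between Hilbert spaces. Applied to $A_0$ and to $B := |A|\big|N(A)^\perp$ (a bijection of $N(A)^\perp$ onto itself, by the previous paragraph), together with $\|Ax\| = \||A|x\|$, this immediately yields $\|A_0^{-1}\|^{-1} = \|B^{-1}\|^{-1}$, i.e.\ the second equality in \eqref{e:sigma}; both numbers are strictly positive since the operators are boundedly invertible.

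It remains to identify these quantities with $\inf(\sigma(|A|)\setminus\{0\})$, and this is the step that demands the most care. I would decompose $\calH = N(A)\oplus N(A)^\perp$ into subspaces reducing the selfadjoint operator $|A|$: on $N(A)$ it is $0$, and on $N(A)^\perp$ it restricts to the positive, injective, boundedly invertible operator $B$. Hence $\sigma(|A|) = \sigma(B)$ when $N(A) = \{0\}$ and $\sigma(|A|) = \{0\}\cup\sigma(B)$ otherwise. The crucial observation is that bounded invertibility of $B$ forces $\sigma(B)\subseteq[\|B^{-1}\|^{-1},\infty)$ to be bounded away from $0$, so that in both cases $\sigma(|A|)\setminus\{0\} = \sigma(B)$. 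Since $B$ is positive selfadjoint and invertible, the spectral mapping theorem gives $\|B^{-1}\| = \max\sigma(B^{-1}) = (\min\sigma(B))^{-1}$, i.e.\ $\min\sigma(B) = \|B^{-1}\|^{-1}>0$, which together with the preceding identification delivers \eqref{e:sigma}. I expect the main obstacle to be exactly this last spectral bookkeeping---cleanly separating the eigenvalue $0$ contributed by the kernel from the rest of $\sigma(|A|)$ and confirming it is isolated---rather than any single difficult estimate.
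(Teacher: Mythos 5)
Your proof is correct and complete. Note that the paper does not actually prove this lemma --- it simply refers to Kulkarni--Nair \cite{kn} --- so there is no in-text argument to compare against; your self-contained derivation is a welcome substitute. The reduction of everything to the pointwise identity $\||A|x\| = \|Ax\|$ is the standard and efficient route: it gives $N(A)=N(|A|)=N(A^*A)$ at once, transfers the lower bound on $A_0 = A|N(A)^\perp$ (which holds by the open mapping theorem, since $A_0$ is a bounded bijection onto the closed, hence complete, space $R(A)$) to $|A|$, and then the selfadjointness of $|A|$ plus the closed range theorem yield \eqref{e:ran}. The spectral bookkeeping for \eqref{e:sigma} is also sound: the decomposition $\calH = N(A)\oplus N(A)^\perp$ reduces $|A|$, the spectrum of the direct sum is the union of the spectra, and $\min\sigma(B) = \|B^{-1}\|^{-1}$ for the positive invertible part $B$. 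One cosmetic point: you do not need to ``confirm that $0$ is isolated'' as a separate task --- the identity $\sigma(|A|)\setminus\{0\} = \sigma(B)$ together with $\sigma(B)\subseteq[\|B^{-1}\|^{-1},\infty)$ already contains that information, and isolation of $0$ is a byproduct rather than a prerequisite. Also note that the hypothesis $A\neq 0$ is what guarantees $N(A)^\perp\neq\{0\}$, so that $\sigma(B)$ is nonempty and the infimum in \eqref{e:sigma} is attained on a nonvoid set; it is worth saying this explicitly.
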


We denote the value in \eqref{e:sigma} by $\sigma_A$. It is often called the {\it reduced minimum modulus} of A (see, e.g., \cite[p.\ 231]{k}). The following well known Jacobsen Lemma implies in particular that $\sigma_{A^*} = \sigma_A$.

\begin{lem}\label{l:swap}
Let $S\in L(\calH,\calK)$ and $T\in L(\calK,\calH)$. Then
$$
\sigma(ST)\setminus\{0\} = \sigma(TS)\setminus\{0\}.
$$
\end{lem}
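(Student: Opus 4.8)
The plan is to prove the equivalent pointwise statement: for every scalar $\la\neq 0$, the operator $ST-\la I$ is invertible in $L(\calK)$ if and only if $TS-\la I$ is invertible in $L(\calH)$. The claimed equality of spectra away from the origin is then immediate. Note first that the two products act on different spaces, $ST\in L(\calK)$ and $TS\in L(\calH)$, so both spectra are well defined and the nonzero parts are what we compare. Since interchanging the roles of $S$ and $T$ (and of $\calH$ and $\calK$) turns one implication into the other, it suffices to establish a single direction.

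So I would fix $\la\neq 0$ and assume that $R:=(ST-\la I)^{-1}\in L(\calK)$ exists. The key step is to write down the explicit candidate inverse
$$
U\,:=\,\frac{1}{\la}\bigl(TRS - I\bigr)\in L(\calH)
$$
for $TS-\la I$, and to verify directly that $(TS-\la I)U = U(TS-\la I) = I$. The only algebraic input is the resolvent relation $R(ST-\la I) = (ST-\la I)R = I$, which rearranges to the identities $(ST)R = R(ST) = I + \la R$. Substituting these into the two relevant products $T\,(STR)\,S$ and $T\,(RST)\,S$ that arise when one expands $(TS-\la I)U$ and $U(TS-\la I)$ collapses each computation: both products become $TS + \la TRS$, and after cancellation every expression reduces to $\la I$, so that dividing by $\la$ yields $(TS-\la I)U = U(TS-\la I) = I$. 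This shows $TS-\la I$ is invertible with bounded inverse $U$.

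There is no substantive obstacle in this argument: the entire content is guessing the correct formula for $U$, after which the verification is a short purely algebraic manipulation relying only on $R$ being a two-sided inverse of $ST-\la I$. The one point requiring a little care is bookkeeping of the spaces, that is, keeping track of the fact that $R$ and $ST$ act on $\calK$ whereas $U$ and $TS$ act on $\calH$, so that each composition in the computation is legitimate; once the formula for $U$ is in hand, this is automatic.
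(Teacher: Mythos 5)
Your proof is correct: the formula $U=\tfrac{1}{\la}(TRS-I)$ together with the identities $STR=RST=I+\la R$ does verify $(TS-\la I)U=U(TS-\la I)=I$, and the space bookkeeping is right ($ST\in L(\calK)$, $TS\in L(\calH)$). The paper gives no proof of this lemma at all --- it is cited as the ``well known Jacobsen Lemma'' --- and your argument is exactly the standard one that is being implicitly invoked, so there is nothing to compare beyond noting that you have supplied the omitted details correctly.
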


For a proof of the following theorem we refer to the survey \cite{br} by Bhatia and Rosenthal on the {\em Sylvester equation}.

\begin{thm}\label{t:sylvester}
Let $S\in L(\calH)$ and $T\in L(\calK)$ and assume that $\sigma(T)\cap\sigma(S) = \emptyset$. Then for each $Y\in L(\calH,\calK)$, the operator equation
\begin{equation}\label{e:sylvester}
XS - TX = Y
\end{equation}
has a unique solution $X\in L(\calH,\calK)$. If $S$ and $T$ are normal such that $\sigma(T)$ lies in a circle $B_r(a)$ and $\sigma(S)$ lies outside of $B_{r+\delta}(a)$, then the solution $X$ of {\rm\eqref{e:sylvester}} satisfies
$$
\|X\|\,\le\,\frac{\|Y\|}{\delta}.
$$
\end{thm}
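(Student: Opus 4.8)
The plan is to prove the two assertions separately: first the existence and uniqueness of a solution of $XS-TX=Y$ for arbitrary $S,T$ with disjoint spectra (these spectra are automatically compact, since $S$ and $T$ are bounded), and then, under the additional normality and separation hypotheses, the quantitative estimate $\|X\|\le\|Y\|/\delta$. I would not try to derive the bound from the general argument directly, since the constant I want is sharp and is tied to the spectral geometry.

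For existence and uniqueness I would use the Rosenblum contour integral. As $\sigma(S)$ and $\sigma(T)$ are disjoint compact sets, I can pick a finite system of smooth closed contours $\Gamma$ enclosing $\sigma(S)$ and leaving $\sigma(T)$ in its exterior, and set
\[
X \;=\; \frac{1}{2\pi i}\int_\Gamma (\la I - T)^{-1}\,Y\,(\la I - S)^{-1}\,d\la .
\]
Writing $T(\la-T)^{-1} = \la(\la-T)^{-1}-I$ and $(\la-S)^{-1}S = \la(\la-S)^{-1}-I$, the mixed terms in $TX-XS$ cancel, and using the Riesz functional calculus identities $\tfrac{1}{2\pi i}\int_\Gamma(\la-S)^{-1}d\la = I$ and $\tfrac{1}{2\pi i}\int_\Gamma(\la-T)^{-1}d\la = 0$ (valid because $\Gamma$ surrounds $\sigma(S)$ but not $\sigma(T)$) one is left with $TX-XS=-Y$, i.e. $XS-TX=Y$. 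Uniqueness comes from the same circle of ideas: a solution of the homogeneous equation satisfies $X(\la-S)^{-1}=(\la-T)^{-1}X$ off the spectra, and integrating this identity over $\Gamma$ forces $X=0$.

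For the norm bound I would bring in normality through the spectral theorem. After the harmless shift $S\mapsto S-a$, $T\mapsto T-a$, which leaves $XS-TX$ unchanged, I may assume $a=0$, so that $|\mu|\le r$ for $\mu\in\sigma(T)$ and $|\nu|\ge r+\delta$ for $\nu\in\sigma(S)$. On the product of the two spectra the kernel $1/(\mu-\nu)$ admits the separated expansion $1/(\mu-\nu)=\sum_{k\ge0}\mu^{k}\nu^{-k-1}$, which converges geometrically since $|\mu|\le r<r+\delta\le|\nu|$. This suggests the candidate solution
\[
X \;=\; \sum_{k=0}^\infty T^{k}\,Y\,S^{-k-1},
\]
and a one-line telescoping computation confirms $XS-TX=Y$, so by the uniqueness already established this \emph{is} the solution. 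Because $T$ and $S$ are normal, operator norm equals spectral radius, whence $\|T^{k}\|\le r^{k}$ and $\|S^{-k-1}\|\le(r+\delta)^{-k-1}$, and summing termwise gives
\[
\|X\|\;\le\;\|Y\|\sum_{k=0}^\infty\frac{r^{k}}{(r+\delta)^{k+1}}\;=\;\frac{\|Y\|}{\delta}.
\]

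The hard part will be obtaining the \emph{operator}-norm bound with the clean constant $1/\delta$. Estimating the contour integral crudely would yield a constant depending on the length of $\Gamma$ and on the spectral geometry, and is not even available when $\sigma(S)$ is large, so it is unsuitable here. The device that resolves this is the separated geometric expansion of $1/(\mu-\nu)$: it turns the double spectral integral into an absolutely convergent series of elementary products $T^{k}YS^{-k-1}$ that can be controlled one factor at a time, and normality is precisely what lets me pass from the spectral constraints on $T$ and $S$ to the norm estimates $\|T^{k}\|\le r^{k}$ and $\|S^{-k-1}\|\le(r+\delta)^{-k-1}$. For non-normal operators these estimates fail, and one should not expect the sharp constant $1/\delta$.
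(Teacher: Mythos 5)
The paper itself gives no proof of this theorem; it simply refers the reader to the Bhatia--Rosenthal survey \cite{br}. Your argument is correct, complete, and is essentially the standard one found in that reference: Rosenblum's contour-integral formula yields existence and uniqueness for disjoint compact spectra, and under the annular separation hypothesis the expansion $X=\sum_{k\ge 0}T^kY S^{-k-1}$, combined with the fact that for normal operators the norm equals the spectral radius (so $\|T^k\|\le r^k$ and $\|S^{-k-1}\|\le (r+\delta)^{-k-1}$), gives the sharp constant $1/\delta$ after summing the geometric series. I see no gaps.
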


For a linear operator $A\in L(\calH,\calK)$ we define the {\em nullity} and {\em deficiency} of $A$ by $\nuli(A) := \dim N(A)$ and $\defi(A) := \dim R(A)^\perp$, respectively. These might be infinite cardinal numbers. Addition and subtraction of cardinal numbers $\kappa$ and $\mu$ is defined as follows:
\begin{align*}
\kappa + \mu
&:= 
\begin{cases}
\kappa + \mu &\text{if $\mu$ and $\kappa$ are finite}\\
\max\{\kappa,\mu\} &\text{otherwise}
\end{cases},\\
\kappa - \mu
&:=
\begin{cases}
\kappa - \mu &\text{if $\mu$ and $\kappa$ are finite}\\
0 &\text{if $\kappa = \mu$}\\
\kappa &\text{if $\kappa > \mu$}\\
-\mu &\text{if $\kappa < \mu$}.
\end{cases}
\end{align*}
We now define the {\em index} of $A$ by
$$
\ind(A) := \dim N(A) - \dim R(A)^\perp.
$$
Recall that $A$ is called {\em semi-Fredholm} if $R(A)$ is closed and one of the numbers $\nuli(A)$ and $\defi(A)$ is finite.

\subsection{The Gap Between Subspaces}\label{ss:gap}
For two closed subspaces $V$ and $W$ of $\calH$ the {\em gap from $V$ to $W$} is defined by
$$
\delta(V,W) := \sup\left\{\|v - P_Wv\| : v\in V,\,\|v\|=1\right\} = \left\|(I - P_W)|V\right\| = \|P_{W^\perp}|V\|.
$$
It is worth noting that
\begin{equation}\label{e:gap_proj2}
\delta(V^\perp,W^\perp) = \|P_W|V^\perp\| = \|(P_W|V^\perp)^*\| = \|P_{V^\perp}|W\| = \delta(W,V).
\end{equation}
As $\delta$ is not a metric, Kato (see \cite[\paragraf IV.2]{k}) defines the {\em gap between $V$ and $W$} by
\begin{equation}\label{e:maxgap}
\hat\delta(V,W) := \max\left\{\delta(V,W),\delta(W,V)\right\}.
\end{equation}
From $\|P_Vx - P_Wx\|^2 = \|P_WP_{V^\perp}x\|^2 + \|P_{W^\perp}P_Vx\|^2$ for $x\in\calH$ it is not hard to deduce that
\begin{equation}\label{e:gap_proj}
\hat\delta(V,W) = \|P_V - P_W\|.
\end{equation}
The next lemma is well known (see, e.g., \cite[Theorem I-6.34]{k}).

\begin{lem}\label{l:gap}
Let $V$ and $W$ be closed subspaces of $\calH$. Then the following statements hold.
\begin{enumerate}
\item[{\rm (i)}]  If $\delta(V,W) < 1$, then $P_W|V\in L(V,W)$ is bounded below.
\item[{\rm (ii)}] If $\hat\delta(V,W) < 1$, then $\delta(V,W) = \delta(W,V)$, and the operators $P_W|V\in L(V,W)$ and $P_V|W\in L(W,V)$ are isomorphisms.
\end{enumerate}
\end{lem}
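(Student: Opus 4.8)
The plan is to derive both parts from the single Pythagorean identity $\|v\|^2 = \|P_W v\|^2 + \|P_{W^\perp} v\|^2$ (valid for any $v\in\calH$) together with the definition $\delta(V,W) = \|P_{W^\perp}|V\|$. For part (i), I would fix $v\in V$ and rewrite the identity as $\|P_W v\|^2 = \|v\|^2 - \|P_{W^\perp}v\|^2$. Since $\|P_{W^\perp}v\|\le\delta(V,W)\|v\|$ by the definition of the gap, this gives $\|P_W v\|^2\ge\bigl(1-\delta(V,W)^2\bigr)\|v\|^2$; the hypothesis $\delta(V,W)<1$ makes the constant strictly positive, so $P_W|V$ is bounded below with lower bound $\sqrt{1-\delta(V,W)^2}$. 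This step is entirely routine.

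For part (ii), I would first note that $\hat\delta(V,W)<1$ forces both $\delta(V,W)<1$ and $\delta(W,V)<1$ by \eqref{e:maxgap}, so part (i) applies to both configurations and shows that $T:=P_W|V\in L(V,W)$ and $S:=P_V|W\in L(W,V)$ are bounded below, hence injective with closed range. The key structural observation is that $S$ and $T$ are adjoints of one another: for $v\in V$ and $w\in W$ one computes $\langle Tv,w\rangle = \langle P_W v,w\rangle = \langle v,w\rangle = \langle v,P_V w\rangle = \langle v,Sw\rangle$, using $P_W w = w$ and $P_V v = v$ together with $v\perp P_{V^\perp}w$, so that $T^* = S$. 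Since a bounded operator between Hilbert spaces is surjective exactly when its adjoint is bounded below, the fact that $T^* = S$ is bounded below yields surjectivity of $T$, and symmetrically $S^* = T$ bounded below yields surjectivity of $S$. Being bounded and bijective, $T$ and $S$ are therefore isomorphisms by the bounded inverse theorem.

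It remains to establish $\delta(V,W) = \delta(W,V)$, and this is the step I expect to require the most care, since it is precisely here that bijectivity (not merely the lower bound from part (i)) must be used. The idea is to read each gap off from the reduced minimum modulus of $T$: the computation in part (i) gives $\inf_{\|v\|=1}\|Tv\| = \sqrt{1-\delta(V,W)^2}$ and, symmetrically, $\inf_{\|w\|=1}\|Sw\| = \sqrt{1-\delta(W,V)^2}$. Because $T$ is now known to be an isomorphism, its lower bound equals $\|T^{-1}\|^{-1}$, and likewise the lower bound of $S=T^*$ equals $\|(T^*)^{-1}\|^{-1} = \|(T^{-1})^*\|^{-1} = \|T^{-1}\|^{-1}$; hence the two infima coincide, forcing $\delta(V,W)=\delta(W,V)$. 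The delicate point is exactly that a merely bounded-below operator need not share its lower bound with its adjoint, so the gap equality has to be deduced \emph{after} the isomorphism property, not before. An equivalent alternative would compare the nonzero spectra of $P_V P_W P_V$ and $P_W P_V P_W$ by Jacobsen's Lemma~\ref{l:swap}, noting that $\hat\delta(V,W)<1$ keeps $0$ out of both spectra so that they agree in full and the two gaps, which are $1$ minus the respective spectral minima, coincide.
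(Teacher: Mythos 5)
Your proof is correct and complete. Note that the paper does not prove this lemma at all --- it simply cites \cite[Theorem I-6.34]{k} --- so there is no in-paper argument to compare against; what you have supplied is a self-contained substitute. Part (i) via the Pythagorean identity is exactly the standard argument. In part (ii), the two load-bearing observations --- that $P_V|W$ and $P_W|V$ are mutually adjoint, and that surjectivity of an operator is equivalent to its adjoint being bounded below --- are correctly identified and correctly deployed, and your derivation of $\delta(V,W)=\delta(W,V)$ from $\inf_{\|v\|=1}\|Tv\| = \|T^{-1}\|^{-1} = \|(T^{*-1})\|^{-1} = \inf_{\|w\|=1}\|T^*w\|$ is exactly the right way to see why the gap symmetry needs full invertibility rather than just the lower bound from (i); you are right to flag that distinction. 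Your alternative sketch via Lemma~\ref{l:swap} also works (take $S=P_VP_W$, $T=P_WP_V$, so $ST=P_VP_WP_V$ and $TS=P_WP_VP_W$), and it is in fact the mechanism behind the paper's own proof of Lemma~\ref{l:Deltazero}; the only looseness there is that $0$ does lie in the spectra of the full-space products whenever $V^\perp$ or $W^\perp$ is nontrivial, so one should phrase the comparison in terms of the compressions $P_VP_WP_V|V$ and $P_WP_VP_W|W$, whose spectra avoid $0$ under the hypothesis and agree by Jacobson's lemma up to the point $0$. This is a cosmetic fix; the main argument as written needs no repair.
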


The following lemma can be found in \cite{mar}. However, for the sake of self-containedness we provide a proof here.

\begin{lem}\label{l:gappyhelpy}
Let $T,S\in L(\calH,\calK)$, $T\neq 0$, and assume that $R(T)$ is closed. Then
$$
\delta\left(R(T),\ol{R(S)}\right)\,\le\,\frac{\|T - S\|}{\sigma_T}.
$$
In particular, if both $R(T)$ and $R(S)$ are closed, then
\begin{equation}\label{e:gap_min}
\hat\delta\left(R(T),R(S)\right)\,\le\,\frac{\|T - S\|}{\min\{\sigma_S,\sigma_T\}}.
\end{equation}
If $\hat\delta\left(R(T),R(S)\right) < 1$, then $\min\{\sigma_S,\sigma_T\}$ in \eqref{e:gap_min} can be replaced by $\max\{\sigma_S,\sigma_T\}$.
\end{lem}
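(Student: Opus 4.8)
The plan is to establish the single directional estimate $\delta(R(T),\ol{R(S)})\le \|T-S\|/\sigma_T$ first, and then deduce both symmetric statements from it. To bound the directional gap, I would take an arbitrary unit vector $y\in R(T)$ and write it as $y = Tx$ with $x\in N(T)^\perp$. Since $R(T)$ is closed, the reduced-minimum-modulus identity \eqref{e:sigma} of Lemma \ref{l:ranclosed} gives $\|(T|N(T)^\perp)^{-1}\| = \sigma_T^{-1}$, so that $\|x\|\le \sigma_T^{-1}\|y\| = \sigma_T^{-1}$. Because $Sx\in R(S)\subseteq\ol{R(S)}$, the distance from $y$ to $\ol{R(S)}$ is at most
$$
\|y - Sx\| = \|Tx - Sx\| = \|(T-S)x\| \le \|T-S\|\,\|x\| \le \frac{\|T-S\|}{\sigma_T}.
$$
As $\|y - P_{\ol{R(S)}}y\| = \dist(y,\ol{R(S)})$ and $y$ was an arbitrary unit vector of $R(T)$, taking the supremum yields the claimed directional bound.

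For the inequality \eqref{e:gap_min}, I would simply apply this estimate twice. As stated it gives $\delta(R(T),R(S))\le \|T-S\|/\sigma_T$; interchanging the roles of $T$ and $S$ (both ranges now being closed) gives $\delta(R(S),R(T))\le \|T-S\|/\sigma_S$. By the defining formula \eqref{e:maxgap}, $\hat\delta(R(T),R(S))$ is the larger of these two directional gaps, hence bounded by the larger of the two right-hand sides, which is exactly $\|T-S\|/\min\{\sigma_S,\sigma_T\}$.

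The only genuinely new ingredient is the final refinement under the hypothesis $\hat\delta(R(T),R(S)) < 1$. Here I would invoke Lemma \ref{l:gap}(ii), which guarantees that in this regime the two directional gaps coincide, $\delta(R(T),R(S)) = \delta(R(S),R(T)) = \hat\delta(R(T),R(S))$. Their common value is therefore bounded by each of the two directional estimates simultaneously, and hence by their \emph{minimum},
$$
\hat\delta(R(T),R(S)) \le \min\left\{\frac{\|T-S\|}{\sigma_T},\frac{\|T-S\|}{\sigma_S}\right\} = \frac{\|T-S\|}{\max\{\sigma_S,\sigma_T\}},
$$
which is the asserted sharpening. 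I do not anticipate any serious obstacle: the heart of the argument is a one-line distance estimate powered by the reduced-minimum-modulus identity of Lemma \ref{l:ranclosed}, and the only delicate point—that smallness of the gap upgrades ``$\min$'' to ``$\max$''—is handled entirely by the equality of directional gaps supplied by Lemma \ref{l:gap}(ii).
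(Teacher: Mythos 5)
Your proof is correct and follows essentially the same route as the paper's: the directional bound via the reduced-minimum-modulus estimate $\|x\|\le\sigma_T^{-1}\|Tx\|$ for $x\in N(T)^\perp$ (choosing $u=x$ as the competitor in the infimum), the symmetrization by swapping $T$ and $S$, and the upgrade from ``$\min$'' to ``$\max$'' via the equality of directional gaps from Lemma \ref{l:gap}(ii). No gaps.
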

\begin{proof}
We have $\delta(V,W) = \sup\{\dist(v,W) : v\in V,\,\|v\|=1\}$. This easily follows from the definition of $\delta(V,W)$. Hence,
\begin{align*}
\delta\left(R(T),\ol{R(S)}\right)
&= \sup\left\{\inf_{z\in R(S)}\|y - z\| : y\in R(T),\,\|y\|=1\right\}\\
&= \sup\left\{\inf_{u\in\calH}\|Tx - Su\| : x\in N(T)^\perp,\,\|Tx\|=1\right\}\\
&\le \sup\left\{\|Tx - Sx\| : x\in N(T)^\perp,\,\|Tx\|=1\right\}\\
&\le \|T - S\|\cdot\sup\{\|x\| : x\in N(T)^\perp,\,\|Tx\|=1\}\\
&= \|T - S\|\cdot\sup\{\|x\| : x\in N(T)^\perp,\,\||T|x\|=1\}\\
&= \|T - S\|\cdot\sigma_T^{-1}.
\end{align*}
If also $R(S)$ is closed, then
$$
\hat\delta\left(R(T),R(S)\right)\le\max\left\{\frac{\|T - S\|}{\sigma_T},\frac{\|T - S\|}{\sigma_S}\right\} = \frac{\|T - S\|}{\min\{\sigma_S,\sigma_T\}}.
$$
Now, assume that $\hat\delta\left(R(T),R(S)\right) < 1$. Then $\delta(R(T),R(S)) = \delta(R(S),R(T))$ by Lemma \ref{l:gap}, which immediately yields the claim.
\end{proof}

In addition to the gap metric $\hat\delta$ we define the {\em gap difference} between the closed subspaces $V$ and $W$ by
$$
\Delta(V,W) := |\delta(V,W) - \delta(W,V)|.
$$
We are particularly interested in the case where $\Delta(V,W) = 0$.

\begin{lem}\label{l:Deltazero}
Let $V,W\subset\calH$ be closed subspaces of $\calH$. Then $\Delta(V,W) = 0$ holds if and only if $P_V|W\in L(W,V)$ and $P_W|V\in L(V,W)$ are both surjective or both not.
\end{lem}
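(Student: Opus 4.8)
The plan is to reduce everything to a comparison of the two numbers $a := \delta(V,W)$ and $b := \delta(W,V)$, which both lie in $[0,1]$ and satisfy $\Delta(V,W)=|a-b|$. First I would record that, viewing $A := P_W|V$ as an operator in $L(V,W)$, its Hilbert space adjoint is $A^* = P_V|W \in L(W,V)$; indeed $\langle P_W v, w\rangle = \langle v, w\rangle = \langle v, P_V w\rangle$ for all $v\in V$ and $w\in W$. Thus the two projections in the statement form an adjoint pair of contractions.

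The central step is to translate surjectivity of these operators into the size of $a$ and $b$. For a unit vector $v\in V$ one has $\|P_W v\|^2 = 1 - \|P_{W^\perp}v\|^2$, so that
\[
\delta(V,W)^2 = \sup_{v\in V,\,\|v\|=1}\|P_{W^\perp}v\|^2 = 1 - \inf_{v\in V,\,\|v\|=1}\|P_W v\|^2 .
\]
Hence $\delta(V,W)<1$ if and only if $P_W|V$ is bounded below, and symmetrically $\delta(W,V)<1$ if and only if $P_V|W$ is bounded below. Combining this with the standard fact that a bounded operator between Hilbert spaces is surjective exactly when its adjoint is bounded below, I arrive at the dictionary
\[
P_W|V \text{ surjective} \iff b<1, \qquad P_V|W \text{ surjective} \iff a<1 .
\]

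With this dictionary the assertion becomes an elementary case analysis. The hypothesis that $P_W|V$ and $P_V|W$ are both surjective or both not surjective says precisely that $(a<1 \text{ and } b<1)$ or $(a=1 \text{ and } b=1)$. If $\Delta(V,W)=0$, i.e.\ $a=b$, the common value is either $<1$ or equal to $1$, so this disjunction holds and this direction is immediate. Conversely, the subcase $a=b=1$ gives $\Delta(V,W)=0$ at once, while the subcase $a<1$ and $b<1$ is exactly $\hat\delta(V,W)<1$, whence Lemma \ref{l:gap}(ii) yields $\delta(V,W)=\delta(W,V)$ and thus $\Delta(V,W)=0$. The only non-trivial input is therefore Lemma \ref{l:gap}(ii), which is available; the sole remaining care is the bookkeeping of matching each projection to the correct gap $a$ or $b$ in the dictionary above.
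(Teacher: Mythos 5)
Your proof is correct, but it takes a genuinely different route from the paper's. The paper introduces $T_{11}=P_V|W$, $T_{12}=P_V|W^\perp$, $T_{21}=P_{V^\perp}|W$, derives the identities $T_{11}T_{11}^*+T_{12}T_{12}^*=I_V$ and $T_{11}^*T_{11}+T_{21}^*T_{21}=I_W$, and reads off $\delta(V,W)^2=1-\min\sigma(T_{11}T_{11}^*)$ and $\delta(W,V)^2=1-\min\sigma(T_{11}^*T_{11})$; the equivalence then follows because the nonzero spectra of $T_{11}T_{11}^*$ and $T_{11}^*T_{11}$ coincide (in effect Lemma~\ref{l:swap}), so the two minima can differ only when exactly one of the products is invertible, i.e.\ exactly when one of $T_{11},T_{11}^*$ is surjective and the other is not. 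You instead translate surjectivity into the dichotomy $\delta<1$ versus $\delta=1$ via the adjoint pair $(P_W|V)^*=P_V|W$ and the standard equivalence ``surjective iff adjoint bounded below'', and then settle the only nontrivial converse case ($a<1$ and $b<1$) by invoking Lemma~\ref{l:gap}(ii). Your dictionary is matched correctly (it also survives the degenerate case where one subspace is trivial), and the case analysis is exhaustive, so the argument is sound. What the paper's version buys is self-containedness: it reproves in passing the implication $\hat\delta(V,W)<1\Rightarrow\delta(V,W)=\delta(W,V)$ that you import from Kato through Lemma~\ref{l:gap}(ii), and it makes the quantitative link between the two gaps and the spectra of $T_{11}T_{11}^*$ and $T_{11}^*T_{11}$ explicit. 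What your version buys is economy: only textbook facts about adjoints plus a lemma the paper has already recorded, with no need for the explicit operator identities.
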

\begin{proof}
Define the operators $T_{11} := P_V|W\in L(W,V)$, $T_{12} := P_V|W^\perp\in L(W^\perp,V)$, and $T_{21} := P_{V^\perp}|W\in L(W,V^\perp)$. Note that $\|T_{12}\| = \delta(V,W)$, $\|T_{21}\| = \delta(W,V)$, and $T_{11}^* = P_W|V$. We have
$$
T_{11}T_{11}^* + T_{12}T_{12}^* = P_VP_W|V + P_VP_{W^\perp}|V = P_V(P_W + P_{W^\perp})|V = P_V|V = I_V.
$$
Similarly, one sees that $T_{11}^*T_{11} + T_{21}^*T_{21} = I_W$. Thus,
\begin{align*}
\|T_{12}\|^2 &= \|T_{12}T_{12}^*\| = \|I_V - T_{11}T_{11}^*\|,\\
\|T_{21}\|^2 &= \|T_{21}^*T_{21}\| = \|I_W - T_{11}^*T_{11}\|.
\end{align*}
Since $\|T_{11}\| = \|P_V|W\|\le 1$, the operators $I_V - T_{11}T_{11}^*$ and $I_W - T_{11}^*T_{11}$ have their spectra in $[0,1]$. Hence,
\begin{align*}
\|T_{12}\|^2 &= \max\,\sigma(I_V - T_{11}T_{11}^*) = 1 - \min\,\sigma(T_{11}T_{11}^*),\\
\|T_{21}\|^2 &= \max\,\sigma(I_W - T_{11}^*T_{11}) = 1 - \min\,\sigma(T_{11}^*T_{11}).
\end{align*}
Therefore, we have $\Delta(V,W) = 0$ if and only if $T_{11}$ and $T_{11}^*$ are both surjective or both not.
\end{proof}

\begin{cor}\label{c:suff_Deltazero}
Let $V,W\subset\calH$ be closed subspaces of $\calH$. Then for $\Delta(V,W) = 0$ it is sufficient that one of the following conditions holds.
\begin{enumerate}
\item[{\rm (i)}]   $P_VW$ is not closed.
\item[{\rm (ii)}]  $P_WV$ is not closed.
\item[{\rm (iii)}] $\dim V = \dim W < \infty$.
\end{enumerate}
\end{cor}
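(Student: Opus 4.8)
The plan is to reduce everything to Lemma \ref{l:Deltazero}, according to which $\Delta(V,W) = 0$ holds precisely when the operators $T_{11} := P_V|W\in L(W,V)$ and $T_{11}^* = P_W|V\in L(V,W)$ (introduced in the proof of that lemma) are \emph{both} surjective or \emph{both} non-surjective. So for each of the three conditions it suffices to verify this dichotomy. The key observation is that $R(T_{11}) = P_VW$ and $R(T_{11}^*) = P_WV$, together with the closed range theorem, which asserts that $R(T_{11})$ is closed if and only if $R(T_{11}^*)$ is closed.

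For condition (i), suppose $P_VW = R(T_{11})$ is not closed. Then $T_{11}$ cannot be surjective, since its range would otherwise equal the (closed) space $V$. By the closed range theorem, $R(T_{11}^*) = P_WV$ is then not closed either, so $T_{11}^*$ is non-surjective as well. Hence both operators fail to be surjective, and Lemma \ref{l:Deltazero} yields $\Delta(V,W) = 0$. Condition (ii) is handled by the identical argument with the roles of $T_{11}$ and $T_{11}^*$ (equivalently, of $V$ and $W$) interchanged.

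For condition (iii), both $V$ and $W$ are finite-dimensional with $\dim V = \dim W$, so $T_{11}$ is a linear map between finite-dimensional spaces of the same dimension. Since $\rank(T_{11}) = \rank(T_{11}^*)$, the operator $T_{11}$ is surjective, i.e.\ $\rank(T_{11}) = \dim V$, if and only if $T_{11}^*$ is surjective, i.e.\ $\rank(T_{11}^*) = \dim W$. This is again exactly the dichotomy required by Lemma \ref{l:Deltazero}.

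I do not expect any real obstacle here: the corollary is essentially a translation of the characterization in Lemma \ref{l:Deltazero} into more concrete language. The only points demanding care are the correct identification of the ranges of $T_{11}$ and $T_{11}^*$ with $P_VW$ and $P_WV$, and the invocation of the closed range theorem to transport the (non-)closedness hypothesis from one operator to its adjoint in cases (i) and (ii).
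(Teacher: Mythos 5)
Your proof is correct and follows essentially the same route as the paper: both reduce to Lemma \ref{l:Deltazero}, use the closed range theorem to handle (i) and (ii) simultaneously, and observe that in the finite-dimensional equal-dimension case surjectivity of $P_V|W$ and of its adjoint are equivalent. The only cosmetic difference is that you argue via equality of ranks where the paper notes that $T$ and $T^*$ are both bijective or both not and that bijectivity coincides with surjectivity in case (iii).
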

\begin{proof}
Let $T := P_V|W\in L(W,V)$. If $\ran T = P_VW$ is not closed then the same is true for $\ran T^* = P_WV$. In particular, (i) and (ii) are equivalent. In this case, both operators $P_V|W$ and $P_W|V$ cannot be surjective. Hence, Lemma \ref{l:Deltazero} implies $\Delta(V,W) = 0$. In general, $T$ and $T^*$ are both bijective or both not. But in the case of (iii), bijectivity coincides with surjectivity so that $\Delta(V,W) = 0$ follows directly from Lemma \ref{l:Deltazero}.
\end{proof}

\subsection{The Polar Decomposition of a Bounded Operator}
Recall that every bounded operator $A\in L(\calH,\calK)$ admits a so-called {\em polar decomposition}
$$
A = Q|A|,
$$
where
\begin{itemize}
\item[{\rm (a)}] $|A| = (A^*A)^{1/2}\in L(\calH)$, $Q\in L(\calH,\calK)$,
\item[{\rm (b)}] $Q|N(Q)^\perp$ is isometric, and
\item[{\rm (c)}] $N(Q) = N(A)$.
\end{itemize}
The polar decomposition of $A$ with the properties (a)--(c) is unique (see, e.g., \cite[VI.2.7]{k}). Note that (b) implies that $R(Q)$ is closed. An operator $Q\in L(\calH,\calK)$ satisfying (b) is often called a {\em partial isometry}. As is easily seen, we have that
\begin{equation}\label{e:Q}
Q^*Q = P_{N(Q)^\perp}\quad\text{and}\quad QQ^* = P_{R(Q)}.
\end{equation}
The partial isometry in the polar decomposition of $A\in L(\calH,\calK)$ will be called the {\em angular factor} of $A$. Here, we will also denote it by $Q_A$. Hence, $A = Q_A|A|$.

\begin{lem}\label{l:polar1}
Let $Q$ be the angular factor of $A\in L(\calH,\calK)$. If $R(A)$ is closed, then
$$
R(Q) = R(A),
$$
and $Q^*$ is the angular factor of $A^*$.
\end{lem}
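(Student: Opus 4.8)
The plan is to treat the two assertions separately: first establish the range identity $R(Q)=R(A)$, and then use it—together with an explicit formula for $|A^*|$—to verify the three defining properties of the angular factor for $Q^*$. For the range identity I would begin from the trivial inclusion $R(A)=R(Q|A|)\subseteq R(Q)$ and work toward the reverse. Since the partial isometry $Q$ annihilates $N(Q)=N(A)$, one has $R(Q)=Q(N(Q)^\perp)=Q(N(A)^\perp)$. The key input is Lemma \ref{l:ranclosed}: under the closed-range hypothesis it gives $R(|A|)=R(A^*)$, and this subspace is closed and equals $N(A)^\perp=N(Q)^\perp$. Hence $N(Q)^\perp=R(|A|)$ \emph{exactly}, so $R(Q)=Q(R(|A|))=R(Q|A|)=R(A)$, closing the loop.

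For the second assertion the load-bearing step is to identify $|A^*|=(AA^*)^{1/2}$. I would guess the formula $|A^*|=Q|A|Q^*$ and confirm it by showing that the right-hand side is positive selfadjoint and squares to $AA^*$, then invoke uniqueness of the positive square root. Selfadjointness and positivity are immediate from $|A|\ge 0$. For the square I would use $Q^*Q=P_{N(Q)^\perp}$ from \eqref{e:Q} together with the (always valid) inclusion $R(|A|)\subseteq N(Q)^\perp$, which yields $P_{N(Q)^\perp}|A|=|A|$; this gives $(Q|A|Q^*)^2=Q|A|^2Q^*=QA^*AQ^*=AA^*$.

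With this formula in hand, verifying properties (a)--(c) for $Q^*$ becomes routine. Property (a) reads $Q^*|A^*|=Q^*Q|A|Q^*=P_{N(Q)^\perp}|A|Q^*=|A|Q^*=A^*$, again using $P_{N(Q)^\perp}|A|=|A|$. Property (b) is the standard fact that the adjoint of a partial isometry is again a partial isometry, isometric on $N(Q^*)^\perp=R(Q)$. Property (c) is where the first part pays off: $N(Q^*)=R(Q)^\perp=R(A)^\perp=N(A^*)$, the middle equality being precisely the range identity $R(Q)=R(A)$. By uniqueness of the polar decomposition, $Q^*$ is then the angular factor of $A^*$.

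I expect the only genuinely nontrivial step to be the square-root identification $|A^*|=Q|A|Q^*$; everything else is bookkeeping with the partial-isometry relations \eqref{e:Q} and the range computation from Lemma \ref{l:ranclosed}. The one subtlety worth keeping straight is that the inclusion $R(|A|)\subseteq N(Q)^\perp$, needed for $P_{N(Q)^\perp}|A|=|A|$, holds unconditionally, whereas the \emph{equality} $R(|A|)=N(Q)^\perp$ used in the range claim genuinely requires the closed-range hypothesis.
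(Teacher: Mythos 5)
Your proposal is correct and follows essentially the same route as the paper: both obtain $R(Q)=QN(Q)^\perp=QR(|A|)=R(A)$ from the identity $R(|A|)=N(A)^\perp$ supplied by Lemma \ref{l:ranclosed}, and both identify $|A^*|=Q|A|Q^*$ via uniqueness of the positive square root by computing $(Q|A|Q^*)^2=AA^*$. The only difference is presentational — you spell out the verification of properties (a)--(c) for $Q^*$ and flag the distinction between the unconditional inclusion $R(|A|)\subseteq N(Q)^\perp$ and the equality requiring closed range, which the paper leaves implicit.
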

\begin{proof}
Since $R(A)$ is closed, Lemma \ref{l:ranclosed} implies $R(|A|) = R(A^*) = N(A)^\perp = N(Q)^\perp$. Now, the first assertion follows: $R(A) = R(Q|A|) = QR(|A|) = QN(Q)^\perp = R(Q)$. For the second claim, we observe that $A^* = |A|Q^* = Q^*(Q|A|Q^*)$. Since $Q^*$ is a partial isometry with $N(Q^*) = N(A^*)$, it suffices to prove that $Q|A|Q^* = (AA^*)^{1/2}$. But this is clear since the operator $Q|A|Q^*$ is selfadjoint, non-negative and satisfies $(Q|A|Q^*)^2 = (Q|A|)Q^*Q(|A|Q^*) = AP_{R(A^*)}A^* = AA^*$.
\end{proof}

\begin{lem}\label{l:ext}
Let $A\in L(\calH,\calK)$ have closed range such that $\ind(A) = 0$. Then there exists a unitary operator $U : \calH\to\calK$ such that
$$
A = U|A|,\quad UN(A) = R(A)^\perp,\quad U|N(A)^\perp = Q|N(A)^\perp,\;\text{ and}\quad U^*Q = P_{N(A)^\perp},
$$
where $Q$ denotes the angular factor of $A$.
\end{lem}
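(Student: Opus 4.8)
The plan is to extend the partial isometry $Q$ to a unitary operator by defining it appropriately on the kernel $N(A)$, which $Q$ annihilates. Recall that as the angular factor of $A$, the operator $Q$ satisfies $N(Q) = N(A)$ and maps $N(A)^\perp = N(Q)^\perp$ isometrically onto $R(Q)$; since $R(A)$ is closed, Lemma \ref{l:polar1} gives $R(Q) = R(A)$. Thus $Q$ already realizes an isometric isomorphism from $N(A)^\perp$ onto $R(A)$, and the ``unused'' parts are $N(A)$ in the domain and $R(A)^\perp$ in the codomain. The idea is to fill in $Q$ on $N(A)$ by any isometry of $N(A)$ onto $R(A)^\perp$.

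First I would exploit the index hypothesis. By the convention for cardinal subtraction fixed in Section 2, the equality $\ind(A) = \dim N(A) - \dim R(A)^\perp = 0$ means precisely that $\dim N(A) = \dim R(A)^\perp$ as (possibly infinite) cardinals. Since two Hilbert spaces of the same Hilbert-space dimension are unitarily equivalent, there exists an isometric isomorphism $V : N(A) \to R(A)^\perp$.

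Next I would assemble $U$. Using the orthogonal decompositions $\calH = N(A)^\perp \oplus N(A)$ and $\calK = R(A) \oplus R(A)^\perp$ (the latter available because $R(A)$ is closed), I set $U(x_1 + x_2) := Q x_1 + V x_2$ for $x_1 \in N(A)^\perp$ and $x_2 \in N(A)$. Since $Q|N(A)^\perp$ and $V$ are isometries with mutually orthogonal ranges $R(A)$ and $R(A)^\perp$, Pythagoras gives $\|U(x_1+x_2)\|^2 = \|x_1\|^2 + \|x_2\|^2$, so $U$ is isometric; as its range is all of $R(A)\oplus R(A)^\perp = \calK$, it is surjective, hence unitary. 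The identities $U|N(A)^\perp = Q|N(A)^\perp$ and $UN(A) = R(A)^\perp$ are then immediate from the definition. For $A = U|A|$ I would invoke Lemma \ref{l:ranclosed} to get $R(|A|) = R(A^*) = N(A)^\perp$, so that $|A|$ takes values in $N(A)^\perp$ and therefore $U|A| = Q|A| = A$. Finally, since $Q$ vanishes on $N(A)$ and agrees with $U$ on $N(A)^\perp$, one has $Q = U P_{N(A)^\perp}$, whence $U^*Q = U^*U\,P_{N(A)^\perp} = P_{N(A)^\perp}$.

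I do not expect a genuinely hard step here: once $R(A)$ is closed, the two orthogonal decompositions and the identity $R(Q) = R(A)$ are at hand, and the rest is routine verification. The one point requiring care is the cardinal bookkeeping in the infinite-dimensional case, where one must read $\ind(A) = 0$ through the Section 2 convention to obtain genuine equality of the Hilbert-space dimensions $\dim N(A)$ and $\dim R(A)^\perp$, and only then invoke unitary equivalence of equidimensional Hilbert spaces to produce $V$.
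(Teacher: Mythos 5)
Your proposal is correct and is essentially the paper's own proof: both construct $U = QP_{N(A)^\perp} + VP_{N(A)}$ from an isometric isomorphism $V : N(A)\to R(A)^\perp$ (whose existence follows from $\dim N(A) = \dim R(A)^\perp$), verify unitarity via the Pythagorean identity and surjectivity, and read off the remaining properties. The only cosmetic difference is in the last identity, where you write $Q = UP_{N(A)^\perp}$ and use $U^*U = I$, while the paper computes $Q^*U$ directly using $Q^*V = 0$; these are trivially equivalent.
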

\begin{proof}
By assumption, we have $\dim N(A) = \dim R(A)^\perp$. Hence, there exists a unitary operator $W : N(A)\to R(A)^\perp$. Now define
$$
U := QP_{N(A)^\perp} + WP_{N(A)}.
$$
It is clear that $A = U|A|$, $UN(A) = R(A)^\perp$, and $U|N(A)^\perp = Q|N(A)^\perp$. Since $R(Q) = R(A)$ and $R(W) = R(A)^\perp$, for $x\in\calH$ we have
$$
\|Ux\|^2 = \|QP_{N(A)^\perp}x\|^2 + \|WP_{N(A)}x\|^2 = \|P_{N(A)^\perp}x\|^2 + \|P_{N(A)}x\|^2 = \|x\|^2.
$$
As $U$ is evidently surjective, $U$ is unitary. And since $R(W) = R(A)^\perp = R(Q)^\perp = N(Q^*)$, the relation \eqref{e:Q} implies
$$
Q^*U = Q^*QP_{N(A)^\perp} + Q^*WP_{N(A)} = P_{N(Q)^\perp}P_{N(A)^\perp} = P_{N(A)^\perp}.
$$
This proves the lemma.
\end{proof}

\section{Perturbation Results}
The following theorem is the main result of this paper.

\begin{thm}\label{t:main}
Let $A_1,A_2\in L(\calH,\calK)\setminus\{0\}$ have closed ranges such that $\ind(A_1) = \ind(A_2)$ and
\begin{equation}\label{e:Deltadiff}
\Delta(R(A_1),R(A_2)) = 0\qquad\text{or}\qquad\Delta(N(A_1),N(A_2)) = 0.
\end{equation}
If $A_j = Q_j|A_j|$ is the polar decomposition of $A_j$ and $\sigma_j = \sigma_{A_j}$, $j=1,2$, then
\begin{equation}\label{e:main_estimate}
\|Q_1 - Q_2\|\,\le\,\frac{4}{\sigma_1 + \sigma_2}\,\|A_1 - A_2\|.
\end{equation}
\end{thm}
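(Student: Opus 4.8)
The plan is to reduce the estimate to a single Sylvester equation for the product $V := U_1^*U_2$ of two extended angular factors, to control $V-I$ by Theorem \ref{t:sylvester} on the part of $\calH$ where $|A_1|$ and $|A_2|$ are bounded below, and to absorb the contribution of the kernels into a gap term that is small precisely because $\Delta = 0$. Throughout write $E := A_2 - A_1$, $H_j := |A_j|$, $N_j := N(A_j)$, $R_j := R(A_j)$, and $\sigma_j := \sigma_{A_j}$, so that $H_j \ge \sigma_j$ on $N_j^\perp$ and $H_j = 0$ on $N_j$. Since $\sigma_{A^*} = \sigma_A$ (Lemma \ref{l:swap}), $Q_{A^*} = Q^*$ (Lemma \ref{l:polar1}), $\ind(A^*) = -\ind(A)$, and $\Delta(N_1^\perp, N_2^\perp) = \Delta(N_1,N_2)$ by \eqref{e:gap_proj2}, passing to adjoints flips the index and interchanges the two alternatives in \eqref{e:Deltadiff}. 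Hence I may assume $\ind(A_1) = \ind(A_2) \ge 0$; I treat the case $\Delta(N_1,N_2) = 0$, the range case being symmetric.

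First I would record the basic identity. Using Lemma \ref{l:ext} (and its straightforward coisometric analogue when the common index is positive, extending $Q_j$ by a surjective partial isometry $N_j \to R_j^\perp$) I obtain a coisometry $U_j \in L(\calH,\calK)$ with $U_jU_j^* = I$, with $N(U_j) \subseteq N_j$ of dimension $\ind(A_j)$, and with $A_j = U_jH_j$. Then $U_j^*$ is isometric and $U_j^*A_j = H_j = A_j^*U_j$, so with $V := U_1^*U_2$ a short computation gives
\begin{equation*}
H_1(V - I) + (V - I)H_2 \;=\; U_1^*E - E^*U_2 \;=:\; Z, \qquad \|Z\| \le 2\|E\|.
\end{equation*}
If $A_1,A_2$ are invertible this is an honest Sylvester equation for $X \mapsto H_1X + XH_2$ with $\sigma(-H_1) \subseteq (-\infty,-\sigma_1]$ and $\sigma(H_2) \subseteq [\sigma_2,\infty)$, so Theorem \ref{t:sylvester} with separation $\sigma_1+\sigma_2$ gives $\|Q_1 - Q_2\| = \|V - I\| \le 2\|E\|/(\sigma_1+\sigma_2)$; the factor $4$ in \eqref{e:main_estimate} will come from accommodating nontrivial kernels.

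Next I would split off the kernels. Since $Q_j = U_jP_{N_j^\perp}$, we have
\begin{equation*}
Q_1 - Q_2 \;=\; (U_1 - U_2)P_{N_1^\perp} + U_2\bigl(P_{N_1^\perp} - P_{N_2^\perp}\bigr),
\end{equation*}
whence, by \eqref{e:gap_proj}, $\|Q_1 - Q_2\| \le \|(U_1 - U_2)P_{N_1^\perp}\| + \hat\delta(N_1,N_2)$. Here $\Delta(N_1,N_2)=0$ enters decisively: it yields $\delta(N_1,N_2) = \delta(N_2,N_1)$, so $\hat\delta(N_1,N_2) = \delta(N_1,N_2)$, and applying Lemma \ref{l:gappyhelpy} to the adjoints (note $N_j = R(A_j^*)^\perp$ and $\delta(N_1,N_2) = \delta(R(A_2^*),R(A_1^*))$ by \eqref{e:gap_proj2}) gives $\delta(N_1,N_2) \le \|E\|/\sigma_2$ and $\delta(N_2,N_1) \le \|E\|/\sigma_1$. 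Their common value is thus at most $\|E\|/\max\{\sigma_1,\sigma_2\} \le 2\|E\|/(\sigma_1+\sigma_2)$, contributing exactly the missing half of the bound. (In the range case one applies Lemma \ref{l:gappyhelpy} directly to $\hat\delta(R_1,R_2)$.) Without $\Delta = 0$ one would only get $\|E\|/\min\{\sigma_1,\sigma_2\}$, which is the source of the weaker constant in Theorem \ref{t:cr_gap}.

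It remains to show $\|(U_1 - U_2)P_{N_1^\perp}\| \le 2\|E\|/(\sigma_1+\sigma_2)$. Since $U_1^*$ is isometric and $N_1^\perp \subseteq N(U_1)^\perp$, this norm equals $\|(V-I)P_{N_1^\perp}\|$, which I would extract from the Sylvester identity above; this is the step I expect to be the main obstacle. Because $0 \in \sigma(H_1) \cap \sigma(H_2)$ whenever the kernels are nontrivial, Theorem \ref{t:sylvester} does not apply on all of $\calH$, so one cannot simply invert $X \mapsto H_1X + XH_2$. The resolution is to exploit the remaining freedom in the extensions: using $\ind(A_1) = \ind(A_2)$ to match the defect dimensions and $\Delta(N_1,N_2)=0$ to align the kernels, one arranges that the block of $V - I$ meeting the joint kernel direction is controlled (ideally forced to vanish), so that on the complementary subspace, where $H_1 \ge \sigma_1$ and $H_2 \ge \sigma_2$, the restricted equation is a genuine Sylvester equation with separation $\sigma_1 + \sigma_2$. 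Theorem \ref{t:sylvester} then yields $\|(V-I)P_{N_1^\perp}\| \le \|Z\|/(\sigma_1+\sigma_2) \le 2\|E\|/(\sigma_1+\sigma_2)$, and adding the gap term gives \eqref{e:main_estimate}. Verifying that the extensions can be chosen to make the kernel block harmless, and that the block decomposition of the Sylvester equation is legitimate, is where the bulk of the technical effort lies.
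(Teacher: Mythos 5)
Your overall architecture (unitary/coisometric extensions $U_j$, a Sylvester equation for a product of the extensions, and a separate gap term handled via $\Delta=0$ and Lemma \ref{l:gappyhelpy}) is close in spirit to the paper's, and your bookkeeping of the two contributions $2\|A_1-A_2\|/(\sigma_1+\sigma_2)+2\|A_1-A_2\|/(\sigma_1+\sigma_2)$ would indeed produce the constant $4$. But there is a genuine gap at exactly the step you flag as the main obstacle, and the device you propose to close it does not work. In your identity $H_1(V-I)+(V-I)H_2=Z$ the operator $H_1$ acts on the left and $H_2$ on the right, so the only compression compatible with the Sylvester structure is $Y:=P_{N_1^\perp}(V-I)P_{N_2^\perp}$ (left multiplication by $P_{N_1^\perp}$ commutes with $H_1$, right multiplication by $P_{N_2^\perp}$ with $H_2$), and Theorem \ref{t:sylvester} then controls only $\|Y\|$. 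What you need is $\|(V-I)P_{N_1^\perp}\|$, which contains in addition the blocks $P_{N_1}(V-I)P_{N_1^\perp}=P_{N_1}U_1^*U_2P_{N_1^\perp}$ and $P_{N_1^\perp}(V-I)P_{N_2}P_{N_1^\perp}$; these are annihilated, not controlled, by the compressed equation, and they cannot in general be made to vanish by choosing the extensions: in the index-zero case $P_{N_1}U_1^*U_2P_{N_1^\perp}=0$ would force $U_2N_1^\perp\subseteq R(A_1)$, which fails for generic pairs since $U_2N_2^\perp=R(A_2)$. Estimating these residual blocks separately costs further multiples of $\|A_1-A_2\|/\sigma_j$ and destroys the constant.

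The paper resolves precisely this difficulty by a different trick: it does not restrict the Sylvester equation but makes it globally solvable, replacing $|A_j|$ by $D_j:=|A_j|+\sigma_jP_{N(A_j)}$, whose spectrum lies in $[\sigma_j,\infty)$. With $X:=U_2^*Q_1-P_{N(A_2)^\perp}$ one has $\|X\|=\|Q_1-Q_2\|$ outright, with no separate gap term; the expression $XD_1+D_2X$ splits into four summands, each bounded by $\|A_1-A_2\|$, and the two extra summands created by the corrections $\sigma_jP_{N(A_j)}$ are exactly where the hypotheses enter --- the term $\sigma_2P_{N(A_2)}U_2^*U_1P_{N(A_1)^\perp}$ can be estimated only after using $\Delta(R(A_1),R(A_2))=0$ to trade $\delta(R(A_1),R(A_2))$ for $\delta(R(A_2),R(A_1))$ and hence divide by $\sigma_2$ rather than $\sigma_1$. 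Note also that your reduction "by passing to adjoints'' only normalizes the sign of the index; to reach index exactly zero the paper enlarges $\calH$ by an auxiliary space $\calF$ with $\dim\calF=-\ind(A_j)$, a step your unproved coisometric variant of Lemma \ref{l:ext} would have to replace and justify.
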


\begin{rem}\label{r:vergleich}
Note that, in the finite-dimensional situation where $\dim\calH,\dim\calK < \infty$, we always have $\ind(A_1) = \ind(A_2)$ and it follows from Lemma \ref{l:Deltazero} that the rank condition $\rank A_1 = \rank A_2$ implies \eqref{e:Deltadiff}. However, the converse is false. To see this, we give a simple example: Let $\calH = \calK = \C^3$ and $A_1 := P_{\linspan\{e_1\}}$, $A_2 := P_{\linspan\{e_2,e_3\}}$, where $\{e_1,e_2,e_3\}$ denotes the standard basis of $\C^3$. Here, we have
$$
P_{N(A_1)}|N(A_2) = P_{N(A_2)}|N(A_1) = P_{R(A_1)}|R(A_2) = P_{R(A_2)}|R(A_1) = 0.
$$
Hence, \eqref{e:Deltadiff} holds due to Lemma \ref{l:Deltazero}, but $\rank(A_1)\neq\rank(A_2)$. Therefore, Theorem \ref{t:main} even generalizes the known results in the finite-dimensional case.
\end{rem}

\begin{proof}[Proof of Theorem \rmref{t:main}]
The proof is divided into two steps. In the first step, we assume that $\ind(A_1) = \ind(A_2) = 0$ and then reduce the general case to the latter in step 2.

{\bf 1.} Assume that $\ind(A_1) = \ind(A_2) = 0$ and $\Delta(R(A_1),R(A_2)) = 0$. Without loss of generality, assume furthermore that $\sigma_1\le\sigma_2$. For $j=1,2$, let $U_j\in L(\calH,\calK)$ be a unitary operator satisfying $A_j = U_j|A_j|$, $U_jN(A_j) = R(A_j)^\perp$, $U_j|N(A_j)^\perp = Q_j|N(A_j)^\perp$, and $U_j^*Q_j = P_{N(A_j)^\perp}$, whose existence is ensured by Lemma \ref{l:ext}. First of all, we set $X := U_2^*Q_1 - P_{N(A_2)^\perp}$ and observe that
\begin{align*}
\|Q_1 - Q_2\|
&= \|U_2^*(Q_1 - Q_2)\| = \|U_2^*Q_1 - P_{N(A_2)^\perp}\| = \|X\|.
\end{align*}
Now, define $D_j := |A_j| + \sigma_j P_{N(A_j)}$, $j=1,2$, and note that $\sigma(D_j)\subset [\sigma_j,\infty)$. Therefore $\sigma(D_1)\cap\sigma(-D_2) = \emptyset$ and $\dist(\sigma(D_1),\sigma(-D_2)) = \sigma_1 + \sigma_2$. Hence, Theorem \ref{t:sylvester} yields
$$
\|X\|\,\le\,\frac{\|XD_1 + D_2X\|}{\sigma_1 + \sigma_2},
$$
which we shall further estimate in the following. A simple computation gives
\begin{align}
\begin{split}\label{e:four}
XD_1 + D_2X
=~ &\big(U_2^*Q_1|A_1| - |A_2|\big) + \big(|A_2|U_2^*Q_1 - P_{N(A_2)^\perp}|A_1|\big)\\
&- \sigma_1 P_{N(A_2)^\perp}P_{N(A_1)} + \sigma_2 P_{N(A_2)}U_2^*U_1 P_{N(A_1)^\perp}.
\end{split}
\end{align}
In the sequel, we will show that the norm of each of these four summands is not larger than $\|A_1 - A_2\|$, which then yields the result. The first one is simple:
\begin{align*}
\left\|U_2^*Q_1|A_1| - |A_2|\right\| = \left\|Q_1|A_1| - U_2|A_2|\right\| = \|A_1 - A_2\|.
\end{align*}
The norm of the second summand in \eqref{e:four} is estimated as follows:
\begin{align*}
\left\||A_2|U_2^*Q_1 - P_{N(A_2)^\perp}|A_1|\right\|
&= \left\|P_{R(|A_2|)}\left(|A_2|U_2^*Q_1 - |A_1|\right)\right\|\,\le\,\left\||A_2|U_2^*Q_1 - |A_1|\right\|\\
&= \left\|Q_1^*U_2|A_2| - |A_1|\right\| = \left\|Q_1\left(Q_1^*U_2|A_2| - |A_1|\right)\right\|\\
&= \left\|P_{R(Q_1)}U_2|A_2| - Q_1|A_1|\right\|\,\le\,\left\|U_2|A_2| - Q_1|A_1|\right\|\\
&= \|A_1 - A_2\|.
\end{align*}
In the estimation of the third summand in \eqref{e:four} we make use of Lemma \ref{l:ranclosed}.
\begin{align*}
\left\|\sigma_1 P_{N(A_2)^\perp}P_{N(A_1)}\right\|
&\le \sigma_1\left\|\left(A_2\big|N(A_2)^\perp\right)^{-1}\right\|\left\|A_2P_{N(A_2)^\perp}P_{N(A_1)}\right\| = \frac{\sigma_1}{\sigma_2}\left\|A_2P_{N(A_1)}\right\|\\
&\le\|(A_2-A_1)P_{N(A_1)}\|\,\le\,\|A_1 - A_2\|.
%
%&=\sigma_1\left\|P_{N(A_2)^\perp}\big|N(A_1)\right\| = \sigma_1\,\delta(N(A_1),N(A_2))\\
%&= \sigma_1\,\delta(N(A_2),N(A_1)) = \sigma_1\left\|P_{N(A_1)^\perp}P_{N(A_2)}\right\|\\
%&\le\sigma_1\left\|\left(A_1\big|N(A_1)^\perp\right)^{-1}\right\|\left\|A_1P_{N(A_1)^\perp}P_{N(A_2)}\right\|\\
%&= \left\|A_1P_{N(A_2)}\right\| = \left\|\left(A_1 - A_2\right)P_{N(A_2)}\right\|\le \left\|A_1 - A_2\right\|.
\end{align*}
In the estimate of the last term in \eqref{e:four} we use the fact that for a closed subspace $V$ and a unitary operator $U$ we have $P_{UV} = UP_VU^*$ and $(UV)^\perp = UV^\perp$. In addition, we will use $\Delta(R(A_1),R(A_2)) = 0$.
\begin{align*}
\left\|\sigma_2 P_{N(A_2)}U_2^*U_1P_{N(A_1)^\perp}\right\|
&=\sigma_2\left\|U_2P_{N(A_2)}U_2^*U_1P_{N(A_1)^\perp}U_1^*\right\| = \sigma_2\left\|P_{U_2N(A_2)}P_{U_1N(A_1)^\perp}\right\|\\
&=\sigma_2\left\|P_{R(A_2)^\perp}P_{R(A_1)}\right\| = \sigma_2\,\delta(R(A_1),R(A_2))\\
&=\sigma_2\,\delta(R(A_2),R(A_1)) = \sigma_2\left\|P_{R(A_1)^\perp}P_{R(A_2)}\right\|\\
%&=\sigma_2\left\|P_{U_1N(A_1)}P_{U_2N(A_2)^\perp}\right\| = \sigma_2\left\|U_1P_{N(A_1)}U_1^*U_2P_{N(A_2)^\perp}U_2^*\right\|\\
&=\sigma_2\left\|P_{N(A_1)}U_1^*U_2P_{N(A_2)^\perp}\right\| = \sigma_2\left\|P_{N(A_2)^\perp}U_2^*U_1 P_{N(A_1)}\right\|\\
&\le\sigma_2\left\|\left(|A_2|\big|N(A_2)^\perp\right)^{-1}\right\|\left\||A_2|P_{N(A_2)^\perp}U_2^*U_1 P_{N(A_1)}\right\|\\
&= \left\||A_2|U_2^*U_1 P_{N(A_1)}\right\| = \left\|\left(|A_2|U_2^*U_1 - |A_1|\right)P_{N(A_1)}\right\|\\
&\le\left\||A_2|U_2^*U_1 - |A_1|\right\| = \left\||A_2|U_2^* - |A_1|U_1^*\right\|\\
&= \left\|U_2|A_2| - U_1|A_1|\right\| = \|A_1 - A_2\|.
\end{align*}
We have just proved that the assumptions $\ind(A_1) = \ind(A_2) = 0$, $\Delta(R(A_1),R(A_2)) = 0$, and $\sigma_1\le\sigma_2$ lead to \eqref{e:main_estimate}. Now, assume that $\Delta(N(A_1),N(A_2)) = 0$ holds instead of $\Delta(R(A_1),R(A_2)) = 0$. Then $\Delta(R(A_1^*),R(A_2^*)) = 0$ (cf.\ \eqref{e:gap_proj2}). And since also $\ind(A_1^*) = \ind(A_2^*) = 0$ as well as $\sigma_{A_j^*} = \sigma_{A_j}$ (see Lemmas \ref{l:ranclosed} and \ref{l:swap}), we can apply what we just proved to the pair $(A_1^*,A_2^*)$ and obtain
\begin{equation}\label{e:duality}
\left\|Q_{A_1^*} - Q_{A_2^*}\right\|\,\le\,\frac{4}{\sigma_1 + \sigma_2}\,\|A_1^* - A_2^*\|.
\end{equation}
As $Q_{A_j^*} = Q_j^*$ (see Lemma \ref{l:polar1}), we again obtain \eqref{e:main_estimate}. This proves the theorem for the case $\ind(A_1)=\ind(A_2)=0$.

{\bf 2.} Assume that $\ind(A_1) = \ind(A_2) < 0$. Then, in particular, $\nuli(A_j) < \defi(A_j)$ for $j=1,2$. We have to distinguish between the cases where $\defi(A_j)$ is finite or infinite. From $\ind(A_1) = \ind(A_2)$ it follows that $\defi(A_1)$ is finite if and only if $\defi(A_2)$ is. If these numbers are infinite, then $\defi(A_1) = -\ind(A_1) = -\ind(A_2) = \defi(A_2)$. We set $m := -\ind(A_j)$, which is a finite or infinite cardinal number. Choose any Hilbert space $\calF$ with $\dim\calF = m$, let $\tilde{\calH} = \calH\oplus\calF$, and define the extension $\tilde A_j\in L(\tilde{\calH},\calK)$ of $A_j$ by
$$
\tilde A_j(x,y) = A_j x, \quad x\in\calH,\;y\in\calF,\;j=1,2.
$$
Then $R(\tilde A_j) = R(A_j)$ ($j=1,2$) is closed in $\calK$ and we have
\begin{align*}
N(\tilde A_1) = N(A_1)\oplus\calF
\qquad\text{and}\qquad
N(\tilde A_2) = N(A_2)\oplus\calF.
\end{align*}
Thus, for $j=1,2$ we obtain that $\nuli(\tilde A_j) = \nuli(A_j) + m = \defi(A_j)$ if $m$ is finite. But also if $m$ is infinite, we have $\nuli(\tilde A_j) = \dim(N(A_j)\oplus\calF) = \nuli(A_j) + m = m = \defi(A_j)$. Therefore, $\ind(\tilde A_1) = \ind(\tilde A_2) = 0$. Moreover $\Delta(R(\tilde A_1),R(\tilde A_2)) = \Delta(R(A_1),R(A_2))$ and (as is easily verified)
$$
\Delta(N(\tilde A_1),N(\tilde A_2)) = \Delta(N(A_1)\oplus\calF,N(A_2)\oplus\calF) = \Delta(N(A_1),N(A_2)).
$$
By the first part of the proof, 
\[
\|\tilde Q_1 - \tilde Q_2\|\,\le\,\frac{4}{\tilde \sigma_1 + \tilde \sigma_2}\,\|\tilde A_1 - \tilde A_2\|,
\]
where $\tilde Q_j := Q_{\tilde A_j}$ is the angular factor of $\tilde A_j$ and $\tilde\sigma_j := \sigma_{\tilde A_j}$, $j=1,2$. By definition of $\tilde A_1$ and $\tilde A_2$, it is clear that $\|\tilde A_1 - \tilde A_2\| = \| A_1 -  A_2\|$. With respect to the decomposition $\tilde\calH = \calH\oplus\calF$, the operator $\tilde A_j$, $j\in\{1,2\}$, has the matrix representation $\tilde A_j = [A_j,0]$. Hence,
$$
|\tilde A_j| = \left[\begin{matrix}|A_j| & 0\\0 & 0\end{matrix}\right].
$$
This immediately implies $\sigma(|\tilde A_j|) = \sigma(|A_j|)\cup\{0\}$ and thus $\sigma_{\tilde A_j} = \sigma_{A_j} = \sigma_j$. Since $[Q_j,0]$ is a partial isometry with kernel $N(\tilde A_j)$ and $\tilde A_j = [Q_j,0]|\tilde A_j|$, we have $\tilde Q_j = [Q_j,0]$ and thus $\|\tilde Q_1 - \tilde Q_2\| = \| Q_1 -  Q_2\|$. This completes the proof for the case $\ind (A_1) = \ind (A_2) < 0$. In the case $\ind(A_1) = \ind(A_2) > 0$ we have $\ind(A_1^*) = \ind(A_2^*) < 0$ as well as $\Delta(N(A_1^*),N(A_2^*)) = 0$ or $\Delta(R(A_1^*),R(A_2^*)) = 0$. This gives \eqref{e:duality} and thus \eqref{e:main_estimate}.
\end{proof}

Theorem \ref{t:fredholm} is now an easy consequence of Theorem \ref{t:main}.

\begin{proof}[Proof of Theorem \ref{t:fredholm}]
It is clear that $n := \nuli(A_1) = \nuli(A_2)$ and $d := \defi(A_1) = \defi(A_2)$ imply $\ind(A_1) = \ind(A_2)$. Moreover, $n < \infty$ or $d < \infty$, together with Corollary \ref{c:suff_Deltazero}, yields $\Delta(N(A_1),N(A_2)) = 0$ or $\Delta(R(A_1),R(A_2)) = 0$. Thus, the conditions in Theorem \ref{t:main} are met.
\end{proof}

For $\la_0\in\C$ we set $\dot{U}_{\veps}(\la_0) := \{\la\in\C : 0<|\la-\la_0|<\veps\}$. The following corollary is an immediate consequence of Theorem \ref{t:fredholm} and the well known ``punctured neighborhood theorem'' (cf.\ \cite[Theorem III.18.7]{m}).

\begin{cor}
Let $A\in L(\calH)$ be semi-Fredholm. Then there exists $\veps>0$ such that the mapping $\dot{U}_{\veps}(0)\to L(\calH)$, $\la\mapsto Q_{A-\lambda I}$, is continuous.
\end{cor}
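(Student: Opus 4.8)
The plan is to derive the statement directly from Theorem \ref{t:fredholm}, using the punctured neighborhood theorem only to secure the two constancy hypotheses on nullity and deficiency.

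First I would apply the punctured neighborhood theorem to the semi-Fredholm operator $A$. This produces an $\veps > 0$ for which $A - \la I$ is semi-Fredholm whenever $|\la| < \veps$ and for which both $\nuli(A - \la I)$ and $\defi(A - \la I)$ are constant on the punctured disk $\dot{U}_\veps(0)$; denote these constant values by $n$ and $d$. Since $A - \la I$ can vanish for at most one value of $\la$, after possibly shrinking $\veps$ I may assume $A - \la I \neq 0$ for every $\la \in \dot{U}_\veps(0)$.

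Next I would fix an arbitrary $\mu_0 \in \dot{U}_\veps(0)$ and, for any $\la \in \dot{U}_\veps(0)$, apply Theorem \ref{t:fredholm} to the pair $A_1 := A - \la I$ and $A_2 := A - \mu_0 I$. These are nonzero and semi-Fredholm with $\nuli(A_1) = \nuli(A_2) = n$ and $\defi(A_1) = \defi(A_2) = d$, so the hypotheses are satisfied. As $A_1 - A_2 = (\mu_0 - \la)I$, the theorem yields
\[
\|Q_{A - \la I} - Q_{A - \mu_0 I}\|\,\le\,\frac{4\,|\la - \mu_0|}{\sigma_{A - \la I} + \sigma_{A - \mu_0 I}}.
\]
Here I would use that $A - \la I$ has closed range and is nonzero, so that $\sigma_{A - \la I} > 0$ by Lemma \ref{l:ranclosed}; discarding this nonnegative summand in the denominator produces the $\la$-independent Lipschitz estimate
\[
\|Q_{A - \la I} - Q_{A - \mu_0 I}\|\,\le\,\frac{4}{\sigma_{A - \mu_0 I}}\,|\la - \mu_0|,
\]
whose right-hand side tends to $0$ as $\la \to \mu_0$. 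Continuity at $\mu_0$, and therefore on all of $\dot{U}_\veps(0)$, follows at once.

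I do not anticipate a genuine obstacle, since all the quantitative content has been absorbed into Theorem \ref{t:fredholm} and the punctured neighborhood theorem. The only point deserving attention is that the constant $4/(\sigma_1 + \sigma_2)$ in Theorem \ref{t:fredholm} depends on $\la$ through $\sigma_{A - \la I}$; this is harmless precisely because that reduced minimum modulus is nonnegative, so dropping it merely enlarges the bound and removes any need to control $\sigma_{A - \la I}$ from below as $\la \to \mu_0$.
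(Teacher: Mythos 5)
Your proof is correct and follows exactly the route the paper intends: the corollary is stated there as an immediate consequence of Theorem \ref{t:fredholm} and the punctured neighborhood theorem, and your argument simply makes that explicit, including the correct observation that dropping the nonnegative term $\sigma_{A-\la I}$ from the denominator yields a $\la$-independent Lipschitz bound near $\mu_0$. No gaps.
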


\begin{rem}\label{s:remark}
The perturbation bound for the angular factor in Theorem \ref{t:main} can be further improved just as in \cite{hmz} by Hong, Meng, and Zheng in the matrix case. Without loss of generality, we can again assume that $\ind(A_1) = \ind(A_2) = 0$, $\Delta(R(A_1),R(A_2)) = 0$ and $\sigma_1\le\sigma_2$. Denote the sum of the last three terms in \eqref{e:four} by $T$. Then
\begin{align*}
\|T\|^2
&=\left\|\left(|A_2|U_2^*Q_1 - P_{N(A_2)^\perp}|A_1|\right) + \sigma_2 P_{N(A_2)}U_2^*U_1 P_{N(A_1)^\perp} - \sigma_1 P_{N(A_2)^\perp}P_{N(A_1)}\right\|^2\\
&\le\left\|\left(|A_2|U_2^*Q_1 - P_{N(A_2)^\perp}|A_1|\right) - \sigma_1 P_{N(A_2)^\perp}P_{N(A_1)}\right\|^2 + \left\|\sigma_2 P_{N(A_2)}U_2^*U_1 P_{N(A_1)^\perp}\right\|^2\\
&=\left\|\left(Q_1^*U_2|A_2| - |A_1|P_{N(A_2)^\perp}\right) - \sigma_1 P_{N(A_1)}P_{N(A_2)^\perp}\right\|^2 + \left\|\sigma_2 P_{N(A_1)^\perp}U_1^*U_2P_{N(A_2)}\right\|^2\\
&\le\left\||A_2|U_2^*Q_1 - P_{N(A_2)^\perp}|A_1|\right\|^2 + \left\|\sigma_1 P_{N(A_2)^\perp}P_{N(A_1)}\right\|^2 + \left\|\sigma_2 P_{N(A_1)^\perp}U_1^*U_2P_{N(A_2)}\right\|^2.
\end{align*}
A short look into the proof of Theorem \ref{t:main} reveals that the first and the third summand can be estimated by $\|A_1-A_2\|^2$ and that $\|\sigma_1 P_{N(A_2)^\perp}P_{N(A_1)}\|\le\tfrac{\sigma_1}{\sigma_2}\|A_1-A_2\|$. Thus, we obtain $\|T\|^2\le C\|A_1 - A_2\|^2$, where
$$
C = 2 + \frac{\sigma_1^2}{\sigma_2^2} = 1 + \frac{\sigma_1^2 + \sigma_2^2}{\max\{\sigma_1^2,\sigma_2^2\}}.
$$
%By the proof of Theorem \ref{t:main} we have $\|\sigma_2 P_{N(A_1)^\perp}U_1^*U_2P_{N(A_2)}\|\le\|A_1-A_2\|$. But also
%\begin{align*}
%\left\|\sigma_2 P_{N(A_1)^\perp}U_1^*U_2P_{N(A_2)}\right\| 
%&\le\sigma_2\left\|\left(|A_1|\big|N(A_1)^\perp\right)^{-1}\right\|\left\| |A_1|P_{N(A_1)^\perp}U_1^*U_2 P_{N(A_2)}\right\|\\
%&\le \frac{\sigma_2}{\sigma_1} \|A_1 - A_2\|.
%\end{align*}
%Thus, we have
%$$
%\|\sigma_2 P_{N(A_1)^\perp}U_1^*U_2P_{N(A_2)}\|\,\le\,\min\left\{1,\frac{\sigma_2}{\sigma_1}\right\}\|A_1 - A_2\| = \frac{\sigma_2}{\max\{\sigma_1,\sigma_2\}}\,\|A_1-A_2\|.
%$$
%Similarly, one sees that $\|\sigma_1 P_{N(A_2)^\perp}P_{N(A_1)}\|\le\tfrac{\sigma_1}{\max\{\sigma_1,\sigma_2\}}\|A_1-A_2\|$.
Hence, the following improved perturbation bound holds.
\end{rem}

\begin{thm}
Let $\calH$ and $\calK$ be Hilbert spaces and let $A_1,A_2\in L(\calH,\calK)\setminus\{0\}$ satisfy the conditions in Theorem \rmref{t:main}. If $A_j = Q_j|A_j|$ is the polar decomposition of $A_j$, $j=1,2$, then
\begin{equation*}%\label{e:hmz_estimate}
\|Q_1 - Q_2\|\,\le\,\frac{1}{\sigma_1 + \sigma_2}\, \left(1+\sqrt{1+\frac{\sigma_1^2+\sigma_2^2}{\max\{\sigma_1^2, \sigma_2^2\}}}\,\right)\|A_1 - A_2\|,
\end{equation*}
where $\sigma_j$ denotes the smallest positive spectral value of $|A_j| = (A_j^*A_j)^{1/2}$, $j=1,2$.
\end{thm}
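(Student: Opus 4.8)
The plan is to follow the proof of Theorem~\ref{t:main} verbatim up to the point where the quantity $XD_1+D_2X$ is split into the four summands of \eqref{e:four}, and to replace only the final, crude termwise estimate by the sharper bound recorded in Remark~\ref{s:remark}. First I would perform exactly the same two reductions as in Theorem~\ref{t:main}: the passage to adjoints (Lemmas~\ref{l:polar1}, \ref{l:ranclosed}, \ref{l:swap}) to exchange the roles of $R(A_j)$ and $N(A_j)$, and the extension $\tilde A_j=[A_j,0]$ by a space $\calF$ of dimension $|\ind(A_j)|$ to reduce to the case $\ind(A_1)=\ind(A_2)=0$. Neither reduction alters $\sigma_1,\sigma_2$, $\|A_1-A_2\|$, or $\|Q_1-Q_2\|$, so it suffices to treat the case $\ind(A_1)=\ind(A_2)=0$, $\Delta(R(A_1),R(A_2))=0$, and $\sigma_1\le\sigma_2$.

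In this reduced case I would recall from the proof of Theorem~\ref{t:main} that $\|Q_1-Q_2\|=\|X\|$ with $X=U_2^*Q_1-P_{N(A_2)^\perp}$, and that the Sylvester estimate (Theorem~\ref{t:sylvester}) gives $\|X\|\le(\sigma_1+\sigma_2)^{-1}\|XD_1+D_2X\|$. Writing $XD_1+D_2X$ as the first summand of \eqref{e:four} plus the quantity $T$ (the sum of the last three summands), the proof of Theorem~\ref{t:main} already shows that the first summand has norm exactly $\|A_1-A_2\|$, while Remark~\ref{s:remark} supplies $\|T\|\le\sqrt{C}\,\|A_1-A_2\|$ with $C=1+\frac{\sigma_1^2+\sigma_2^2}{\max\{\sigma_1^2,\sigma_2^2\}}$. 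The triangle inequality then yields $\|XD_1+D_2X\|\le(1+\sqrt{C})\|A_1-A_2\|$, and dividing by $\sigma_1+\sigma_2$ produces precisely the asserted constant $(\sigma_1+\sigma_2)^{-1}(1+\sqrt{C})$.

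The only substantive step---carried out in Remark~\ref{s:remark}---is the bound on $\|T\|$, and this is where the gain over the constant $4$ of Theorem~\ref{t:main} originates: rather than estimating the three summands of $T$ separately (which would again cost $3\|A_1-A_2\|$), I would group them according to the subspace into which their ranges fall. Two of the summands map into $N(A_2)^\perp=R(|A_2|)$, while the remaining one, $\sigma_2P_{N(A_2)}U_2^*U_1P_{N(A_1)^\perp}$, maps into $N(A_2)$; since these subspaces are orthogonal, operators $B,C$ with $R(B)\perp R(C)$ satisfy the Pythagorean-type inequality $\|B+C\|^2\le\|B\|^2+\|C\|^2$. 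Applying this once, then passing to adjoints (so that the factor $\sigma_2$ is paired with the better-controlled projection and $\sigma_1$ with the worse one) and applying it a second time, reduces $\|T\|^2$ to a sum of three squared norms, each already estimated in the proof of Theorem~\ref{t:main}; these sum to $(2+\sigma_1^2/\sigma_2^2)\|A_1-A_2\|^2=C\|A_1-A_2\|^2$. The main obstacle is thus organizational rather than computational: identifying the correct grouping and verifying the orthogonality of the relevant ranges, so that the Pythagorean inequality applies and no cross terms are lost.
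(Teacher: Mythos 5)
Your proposal is correct and follows essentially the same route as the paper: the paper's proof of this theorem is exactly the argument of Remark \ref{s:remark}, which keeps the setup and reductions of Theorem \ref{t:main} intact, isolates the first summand of \eqref{e:four} (of norm exactly $\|A_1-A_2\|$), and bounds the remaining sum $T$ by $\sqrt{2+\sigma_1^2/\sigma_2^2}\,\|A_1-A_2\|$ via the same two applications of the orthogonal-range inequality $\|B+C\|^2\le\|B\|^2+\|C\|^2$ (the second after passing to adjoints). Your identification of the groupings by range ($N(A_2)^\perp$ versus $N(A_2)$, then $N(A_1)^\perp$ versus $N(A_1)$ after taking adjoints) matches the paper's computation exactly.
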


We conclude this paper by proving Theorem \ref{t:cr_gap}. The main part of its proof is already covered by the arguments in the proof of Theorem \ref{t:main}.

\begin{proof}[Proof of Theorem \rmref{t:cr_gap}]
Without loss of generality, let $\sigma_2\geq\sigma_1$. Set $X = Q_2^*Q_1 - P_{N(A_2)^\perp}$ and observe that
\begin{align*}
\|Q_1 - Q_2\|
&\leq \|P_{N(Q_2^*)^\perp} Q_1 - Q_2\| + \|P_{N(Q_2^*)}Q_1\|\\
&= \|Q_2^*P_{N(Q_2^*)^\perp} Q_1 - P_{N(Q_2)^\perp}\| + \|P_{N(Q_2^*)}Q_1\|\\
&= \|X\| + \|P_{N(Q_2^*)}Q_1\|.
\end{align*}
Define $D_j := |A_j| + \sigma_j P_{N(A_j)}$, $j=1,2$. By Theorem \ref{t:sylvester},
$$
\|X\|\,\le\,\frac{\|XD_1 + D_2X\|}{\sigma_1 + \sigma_2}.
$$
We again compute
\begin{align}
\begin{split}\label{e:four2}
XD_1 + D_2X
=~ &\big(Q_2^*Q_1|A_1| - |A_2|\big) + \big(|A_2|Q_2^*Q_1 - P_{N(A_2)^\perp}|A_1|\big)\\
&+ \sigma_2 P_{N(A_2)}Q_2^*Q_1 P_{N(A_1)^\perp} - \sigma_1 P_{N(A_2)^\perp}P_{N(A_1)}.
\end{split}
\end{align}
From $P_{N(A_2)}Q_2^* = (Q_2P_{N(Q_2)})^* = 0$ it follows that $\sigma_2 P_{N(A_2)}Q_2^*Q_1 P_{N(A_1)^\perp} = 0$, and the second term in \eqref{e:four2} can be estimated exactly as in the proof of Theorem \ref{t:main}. As seen in Remark \ref{s:remark}, one has $\|\sigma_1 P_{N(A_2)^\perp}P_{N(A_1)}\|\le\frac{\sigma_1}{\sigma_2} \|A_1 - A_2\|\le\|A_1 - A_2\|$. Finally, the first term in (\ref{e:four2}) can be estimated as
\begin{align*}
\|Q_2^*Q_1|A_1| - |A_2|\| &= \|P_{R(Q_2)}Q_1|A_1| - Q_2|A_2|\| 
\leq \|Q_1|A_1| - Q_2|A_2|\| = \|A_1 - A_2\|.
\end{align*}
So, we have shown that
$$
\|X\| \leq \frac{3}{\sigma_1+\sigma_2}\|A_1-A_2\|.
$$
It remains to estimate $\|P_{N(Q_2^*)}Q_1\|$. For this, observe that (see \eqref{e:gap_proj2} and \eqref{e:gap_proj})
\begin{align*}
\|P_{N(Q_2^*)}Q_1\|
&=\|P_{N(Q_2^*)}Q_1 - P_{N(Q_1^*)}Q_1\|\le\|P_{N(Q_2^*)}- P_{N(Q_1^*)}\|\\
&= \|P_{R(A_2)^\perp} - P_{R(A_1)^\perp}\| = \hat\delta\left(R(A_2),R(A_1)\right).
\end{align*}
Now, the estimate \eqref{e:side_estimate} follows from the first part of Lemma \ref{l:gappyhelpy}. If $\hat\delta(R(A_1),R(A_2)) < 1$, then \eqref{e:mitgap} is a consequence of the second part of Lemma \ref{l:gappyhelpy}. Let $\hat\delta(N(A_1),N(A_2)) < 1$. Then $\hat\delta(R(A_1^*),R(A_2^*)) < 1$ and \eqref{e:mitgap} follows from what is already proved and the relations $Q_{A^*} = Q_A^*$ and $\sigma_{A^*} = \sigma_A$.
\end{proof}

\begin{rem}\label{r:smallpert}
Finally, as announced in the Introduction, let us show that Theorem \ref{t:cr_gap} generalizes the results in the finite-dimensional situation if $A_1$ and $A_2$ are small perturbations of each other. More precisely, we show the following:
$$
\text{{\it If $\rank(A_1) = \rank(A_2)$ and $\|A_1 - A_2\| < \max\{\sigma_1,\sigma_2\}/3$, then $\hat\delta(R(A_1),R(A_2)) < 1$}.}
$$
For this, assume that, e.g., $\sigma_2\le\sigma_1$ and $\|A_1 - A_2\| < \sigma_1/3$. We have $\sigma_j = \|A_j^\dagger\|^{-1}$, where $A_j^\dagger$ denotes the Moore-Penrose inverse of $A_j$, $j=1,2$. By \cite[Theorem 4.1]{w},
$$
\|A_1^\dagger - A_2^\dagger\|\le 2\|A_1^\dagger\|\|A_2^\dagger\|\|A_1 - A_2\|.
$$
Therefore,
\begin{align*}
\sigma_1 - \sigma_2 = \frac 1{\|A_1^\dagger\|} - \frac 1{\|A_2^\dagger\|} = \frac{\|A_2^\dagger\| - \|A_1^\dagger\|}{\|A_1^\dagger\|\|A_2^\dagger\|}\le\frac{\|A_1^\dagger - A_2^\dagger\|}{\|A_1^\dagger\|\|A_2^\dagger\|}\le 2\|A_1 - A_2\|.
\end{align*}
Thus, Lemma \ref{l:gappyhelpy} implies $\delta(R(A_1),R(A_2))\le\|A_1 - A_2\|/\sigma_1 < 1/3 < 1$ and
$$
\delta(R(A_2),R(A_1))\le\frac{\|A_1 - A_2\|}{\sigma_2}\le\frac{\|A_1-A_2\|}{\sigma_1 - 2\|A_1-A_2\|} < 1.
$$
This proves $\hat\delta(R(A_1),R(A_2)) < 1$.
\end{rem}

\end{document}